\newtheorem{theorem}{Theorem}[section]
\theoremstyle{plain}
\newtheorem{corollary}[theorem]{Corollary}
\newtheorem{defi}[theorem]{Definition}
\newtheorem{example}[theorem]{Example}
\newtheorem{lemma}[theorem]{Lemma}
\newtheorem{prop}[theorem]{Proposition}
\newtheorem{remark}[theorem]{Remark}
\numberwithin{equation}{section}
\def\B{{\mathcal B}}
 \def\Nset{\mathbb{N}}
 \def\Rset{\mathbb{R}}
\def\what{\widehat}
\def\Xx{{\mathfrak X}}
\def\FrA{{\mathfrak A}}
\def\FrR{{\mathfrak R}}
\def\Aa{{\mathbb A}}
\def\Ga{{\mathbb G}}
\def\One{{1\!\!1}}
\def\Lip{{\rm Lip}}
\def\Dh{\dim_{\rm H}}
\def\half{\frac{1}{2}}
\newcommand{\lam}{\lambda}
\newcommand{\om}{\omega}
\def\Om{\Omega}
\newcommand{\Gam}{\Gamma}
\newcommand{\sig}{\sigma}
\newcommand{\R}{{\mathbb R}}
\newcommand{\Q}{{\mathbb Q}}
\newcommand{\Z}{{\mathbb Z}}
\newcommand{\C}{{\mathbb C}}
\def\N{{\mathbb N}}
\newcommand{\E}{{\mathbb E}\,}
\newcommand{\Prob}{{\mathbb P}\,}
\def\P{\Prob}
\def\bq{{\bf q}}
\def\ba{{\bf a}}
\def\wt{\widetilde}
\def\A{{\mathcal A}}
\def\Mk{{\mathcal M}}
\def\Ik{{\mathcal I}}
\def\Pk{{\mathcal P}}
\def\Sf{{\sf S}}
\def\Tf{{\sf T}}
\def\T{{\mathbb T}}
\def\bx{{\mathbf x}}
\def\bt{{\mathbf t}}
\def\bz{{\mathbf z}}
\def\one{\vec{1}}
\def\be{\begin{equation}}
\def\ee{\end{equation}}
\newcommand{\eps}{{\varepsilon}}
\def\und{\underline}
\def\Cc{{\mathscr M}}
\def\Mc{{\mathscr M}}
\def\ve1{\vec{1}}
\def\Crs{{\mathfrak C}}
\def\Ak{{\mathcal A}}
\def\bn{{\bf n}}
\def\mfr{\mathfrak}
\begin{document}

\title{A note on spectral properties of random $S$-adic systems}

\author{Boris Solomyak \smallskip\\
\MakeLowercase{with an appendix by}  Pascal Hubert and Carlos Matheus }

\address{Boris Solomyak\\ Department of Mathematics,
Bar-Ilan University, Ramat-Gan, Israel}
\email{bsolom3@gmail.com}

\address{Pascal Hubert: Aix-Marseille Universit\'e, CNRS, Centrale Marseille, Institut de Math\'ematiques de Marseille, I2M - UMR 7373, 13453 Marseille, France.}
\email{pascal.hubert@univ-amu.fr}

\address{Carlos Matheus: Centre de Math\'ematiques Laurent Schwartz, CNRS (UMR 7640), \'Ecole Polytechnique, 91128 Palaiseau, France.}
\email{carlos.matheus@math.cnrs.fr}
\urladdr{http://carlos.matheus.perso.math.cnrs.fr}

\thanks{The research of Solomyak  was supported by the Israel Science Foundation grant \#1647/23}

\begin{abstract} 
The paper is concerned with random $S$-adic systems arising from an i.i.d.\ sequence of unimodular substitutions.
Using equidistribution results of Benoist and Quint, we show in Theorem 3.3 that, under some natural assumptions, if the Lyapunov exponent of the
spectral cocycle is strictly less than 1/2 of the Lyapunov exponent of the random walk on $SL(2,\R)$ driven by the sequence of substitution matrices, then almost surely
the spectrum of the $S$-adic $\Z$-action is singular with respect to any (fixed in advance) continuous measure. Finally, the appendix discusses the weak-mixing property for random $S$-adic systems associated to the family of substitutions introduced in Example 4.1. 
 \end{abstract}

\date{\today}

\keywords{$S$-adic system; spectral cocycle; singular spectrum.}

\maketitle

\begin{flushright}
{ \large \em Dedicated to the memory of Anatoly Moiseevich Vershik (1933--2024)}
\end{flushright}

\bigskip

\thispagestyle{empty}

\section{Introduction}

The paper is devoted to the spectral theory of $S$-adic dynamical systems. They are 
closely related to {\em Bratteli-Vershik} (BV) transformations, introduced by A. M. Vershik \cite{Ver1,Ver2} (who called them ``adic''), see also \cite{VerLiv}, as universal models of ergodic measure-preserving 
systems. 
A {\em substitution $\Z$-action} is defined as the shift on the space of sequences obtained by a repeated iteration of a single substitution on a 
finite alphabet. Ferenczi \cite{Feren}  introduced a generalization, in which a sequence of (possibly different) substitutions is applied at each step, in a predetermined order; he called such a 
system {\em $S$-adic}. Under some restrictions (most importantly, {\em recognizability}, see Definition~\ref{def-recog} below), every $S$-adic
system is measurably isomorphic to a ``natural'' BV-system, for any fully supported invariant measure, see \cite[Theorem 6.5]{BSTY}.

The literature on Bratteli-Vershik transformations, 
$S$-adic systems, and their spectral properties is vast; we mention a few papers, but do not attempt to provide an exhaustive survey. 
For the background on BV transformations as models for Cantor minimal systems, 
see  \cite{HPS,DHS} and the recent book \cite{DP}; for $S$-adic systems, see \cite{BD,BST19,BSTY}, as well as \cite[6.4]{DP}.
A detailed analysis of eigenvalues (both measurable and continuous) for Cantor minimal systems
is given in \cite{DFM19}; see also \cite{BDM10} and references therein for earlier work on this topic. Often BV systems and/or $S$-adic systems are studied with a motivation 
coming from a specific application: e.g., to interval exchange
transformations (IET's) and translation flows \cite{Bu13,Bu14,BuSo18,BuSo20,BuSo21,AFS}, or to multi-dimensional continued fraction algorithms and symbolic coding of toral translations
\cite{BST19,BST23}.
See also \cite{BoSe,BMY,AD,DMR} 
for the study of some special families of  $S$-adic systems. 
Quite often, individual systems are hard to analyze, and instead one tries to obtain results for ``almost every,'' in an appropriate sense, system in a particular class.
A famous example of this kind is a theorem of Avila and Forni \cite{AF} saying that almost every IET, that is not a rotation, is weakly mixing. (This was recently upgraded to 
``quantitative weak mixing with a polynomial rate'' for non-rotation class IET's in \cite{AFS}.)

In this note, following \cite{BuSo18,BuSo20,BuSo21}, we consider {\em random $S$-adic systems}. 
This means that a sequence of substitutions is chosen randomly from a given ``alphabet''; in general, the choice is driven by some ergodic stochastic process, but often it is i.i.d.
Recently, spectral properties of random $S$-adic systems
have been investigated
in several works; among them we should point out the paper \cite{BST23} by Berth\'e, Steiner, and Thuswaldner, where
the emphasis is on the generalized Pisot property and pure discrete spectrum. In contrast, in \cite{BuSo18,BuSo20,BuSo21} we were motivated by the study 
of translation flows on surfaces of higher genus and IET's, which are typically weakly mixing. 
Earlier, random 2-sided BV-systems were introduced by Bufetov \cite{Bu13,Bu14}, who used them to obtain limit theorems for translation flows on flat surfaces.
There is also work on the spectral properties of ``$S$-adic tiling systems''
\cite{Nagai,Tre}.

Our goal here is to extend the results of \cite{BuSo22}, which gave an effective (at least, in some cases) sufficient condition for singularity of the spectrum for substitution
$\Z$-actions, to the setting of random $S$-adic transformations. 
This condition is stated in terms of the top Lyapunov exponent of the spectral (``twisted'') cocycle, introduced in \cite{BuSo20} and extensively studied in recent years. 
The main new idea is an application of powerful 
equidistribution results, due to Benoist and Quint \cite{BQ}. As a bonus, we prove that, under our assumptions,
almost every random $S$-adic system is not just pure singular, but is disjoint from any (chosen in advance) weakly mixing measure-preserving $\Z$-action.
We illustrate this result on a class of examples, where the computations were done ``by hand''. Using numerical methods on a computer, one should be able to considerably expand the class of 
examples, for which this method is applicable.


\section{Preliminaries}

\subsection{Substitutions and $S$-adic systems}
For the background on  substitutions and associated dynamical systems see, e.g., \cite{Queff,Fogg}. In describing our set-up we closely follow \cite{BuSo20,BuSo21}.
Consider the
alphabet $\Ak=\{1,\ldots,d\}$, and denote by $\Ak^+$ the set of finite (non-empty) words with letters in $\Ak$. A {\em substitution}  is a map $\zeta:\,\A\to \A^+$, extended to  $\A^+$ and $\A^{\N}$ by concatenation. 
The {\em substitution matrix} is defined by 
\be \label{sub-mat}
\Sf_\zeta (i,j) = \mbox{number of symbols}\ i\ \mbox{in the word}\ \zeta(j).
\ee
Let $\FrA$ be the set of substitutions $\zeta$ on $\Ak$ with the property that all letters appear in the set of words $\{\zeta(a):\,a\in \Ak\}$ and there exists $a$ such that $|\zeta(a)|>1$.
Consider $\ba =(\zeta_n)_{n\ge 1}$, a 1-sided sequence of substitutions on $\Ak$, called a {\em directive sequence}. We denote
$$
\zeta^{[n]} := \zeta_1\circ \cdots\circ\zeta_n,\ \ n\ge 1.
$$
Recall that $\Sf_{\zeta_1\circ \zeta_2} = \Sf_{\zeta_1}\Sf_{\zeta_2}$.
We will sometimes write
$
\Sf_j:= \Sf_{\zeta_j}\ \ \mbox{and}\ \ \Sf^{[n]}:= \Sf_{\zeta^{[n]}}.
$
We will also consider subwords of the sequence $\ba$ and the corresponding substitutions obtained by composition. Denote
\be \label{notation1}
\Sf_\bq = \Sf_n\cdots \Sf_\ell\ \ \ \mbox{for}\ \ \bq = \zeta_{n}\ldots\zeta_{\ell}.
\ee

Given $\ba$, let $X_{\ba}\subset \Ak^\Z$ be the subspace of all two-sided sequences whose every subword appears as a subword of 
$\zeta^{[n]}(b)$ for some $b\in \Ak$ and $n\ge 1$. Let $T$ be the left shift on $\Ak^\Z$; then $(X_{\ba},T)$ is a topological $S$-adic dynamical system.
We refer to \cite{BD,BST19,BSTY} for the background on $S$-adic shifts.
The following will be assumed throughout the paper:

\medskip

{\bf (A1)} {\em There  is a word $\bq\in \FrA^+$ which appears in $\ba$ infinitely often, for which $\Sf_\bq$ has all entries strictly positive.}
 
 \medskip
 
Property (A1) implies unique ergodicity of the $S$-adic shift, see \cite[Theorems 5.2 and 5.7]{BD};  the claim on unique ergodicity goes back to Furstenberg \cite[(16.13)]{Furst}. 

A directive sequence $\ba = (\zeta_j)_{j\ge 1}$ and the associated $S$-adic system $(X_\ba,T)$ are said to be {\em primitive} if for every $n\ge 1$ there exists $N>n$ such that $\Sf_n\cdots \Sf_{N-1}$ has strictly positive entries. Primitivity implies minimality by a result of Durand, see 
\cite[Lemma 6.4.5]{DP}. It is clear that property (A1) implies primitivity.

We will want the $S$-adic system to be {\em aperiodic}, i.e., to have no periodic points under the shift action.
Aperiodicity is not always easy to check;  we quote some sufficient conditions which suit us well.

A substitution $\zeta$ on $\Ak$ is called {\em left (resp.\,right) proper} if all words $\zeta(a),\ a\in \Ak$, start (resp.\ end) with the same letter.
Another commonly used condition is {\em strong coincidence} \cite{AI01}. A substitution $\zeta$ is said to satisfy the strong coincidence condition 
if there exists $k\in \N$ and a letter $b\in \Ak$ such  that for every letter $a\in \Ak$ we have $\zeta^k(a) = W^a_{k}\, b\, S^a_{k}$, where
$W^a_{k}$ and $S^a_{k}$ are words, possibly empty, with the property that either all $W^a_{k},\ a\in \Ak$, or all $S^a_{k},\ a\in \Ak$,
have the same ``abelianization''.
It is clear that being left or right proper implies strong coincidence.

\begin{prop}[{\cite[Prop.\,2.1]{AD}, \cite{AD1}; see also \cite[Lemma 3.3]{BCDLPP21}}] \label{prop-aper}
Suppose that the directive sequence $\ba = (\zeta_j)_{j\ge 1}$ is primitive, $\det(\Sf_{\zeta_j})\ne 0$ for all $j$, and such that either all $\zeta_j$ are 
left proper, or all $\zeta_j$ are right proper, or
there is a word $\bq$ appearing in $\ba$ infinitely often, with $\zeta_\bq$ satisfying the strong coincidence
condition.
Then the $S$-adic system $(X_\ba,T)$ is aperiodic.
\end{prop}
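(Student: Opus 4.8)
The plan is to argue by contradiction, reducing aperiodicity to a combinatorial rigidity of periodic fixed points, and to use the hypothesis $\det\Sf_{\zeta_j}\ne0$ only in order to discard certain degenerate ``phase'' configurations. Throughout I assume $d=|\Ak|\ge2$ (for $d=1$ the system is a single fixed point and there is nothing to prove).

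\emph{Step 1 (reduction to finiteness).} Primitivity gives minimality \cite[Lemma 6.4.5]{DP}, and a minimal subshift has a periodic point if and only if it is finite, i.e.\ a single periodic orbit. So I suppose for contradiction that $X_\ba$ is finite and let $y\in X_\ba$ have minimal period $p$; since all letters occur by minimality and $d\ge2$, we have $p\ge2$. Note that $\det\Sf_\bq=\prod\det\Sf_{\zeta_j}\ne0$ for every subword $\bq$ of $\ba$.

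\emph{Step 2 (phase rigidity for a self-similar model).} The heart is a fact about a $p$-periodic $y$ fixed by a substitution $\phi$ all of whose images exceed length $p$ (this regime is reached by composing substitutions, block lengths tending to infinity by primitivity). Write $y=\phi(y)$ as a concatenation of blocks $\phi(y_i)$ with left endpoints $P_i$. First, for each letter $b$ all endpoints $P_i$ with $y_i=b$ lie in a single residue class $\Psi(b)$ modulo $p$: two occurrences of the length-$\ge p$ word $\phi(b)$ at $P_i\not\equiv P_{i'}\pmod p$ would force a period properly dividing $p$. The phases satisfy $\Psi(y_{i+1})\equiv\Psi(y_i)+|\phi(y_i)|\pmod p$. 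If $\Psi$ is injective, then, since all $d$ letters occur among the $p$ residues, $p=d$ and $\Psi$ is a bijection; reading off the first letter of each block then shows $\phi(a)$ begins with $a$ for every $a$ (and, applying the same argument to the reversed sequence, ends with $a$).

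\emph{Step 3 (contradiction from properness / strong coincidence).} Stay in the bijective case of Step 2. If $\phi$ is left (resp.\ right) proper, then all $\phi(a)$ begin (resp.\ end) with one fixed letter, which by Step 2 must equal $a$ for \emph{every} $a$, forcing $d=1$ — absurd. If instead $\phi$ satisfies the prefix strong coincidence, all $\phi(a)$ carry a common letter $c$ at a common position $L$ (equal prefix-abelianizations have equal length); since $\Psi$ is a bijection, $\Psi(a)+L$ ranges over all residues, placing $c$ at every phase, so $y$ is constant and $p=1$ — again absurd. The suffix coincidence and right-properness follow by reversal. This is the only place the hypothesis is used, and since left/right properness is a special case of strong coincidence, all three alternatives feed into this single mechanism.

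\emph{Step 4 (from the model to the $S$-adic system; the main obstacle).} Two points remain, and they are the crux. First, $\Psi$ need not be injective a priori; the non-injective configurations are exactly those in which blocks begin at coinciding phases, and one must show these collapse the columns $\vec\ell(\phi(a))$ of $\Sf_\phi$ onto the line through the frequency vector, i.e.\ $\det\Sf_\phi=0$, contradicting $\det\ne0$. Second, the directive sequence is genuinely non-stationary, so there is no exact self-similar $\phi$: in the proper cases one applies Steps 2--3 to $\phi=\zeta^{[n]}$ for large $n$ along the desubstitution tower of tail systems $X_{\ba^{(n)}}$, $\ba^{(n)}=(\zeta_{n+1},\zeta_{n+2},\dots)$, whose periodicity descends with period vectors $\vec v_n$ obeying $\Sf_{\zeta_n}\vec v_n=m_n\vec v_{n-1}$ and for which $\det\Sf_{\zeta_j}\ne0$ ensures $\zeta^{[n]}$ is injective on letters, so that $\Psi$ is well defined and the columns stay independent; in the strong-coincidence case one must additionally locate a genuine self-similar occurrence of the recurrent word $\bq$ carrying the coincidence and propagate the coincidence through the surrounding (non-power) compositions. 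Making this descent and the phase bookkeeping rigorous — in particular ruling out the degenerate phase collapse via $\det\ne0$ and transferring strong coincidence through non-power compositions — is the principal difficulty, and is the content of \cite{AD,BCDLPP21}.
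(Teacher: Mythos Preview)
The paper does not prove this proposition; it simply cites \cite{AD,AD1,BCDLPP21}. So there is no paper proof to compare against, and I can only assess your sketch on its own merits.

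Your overall strategy is reasonable, but Step~2 contains a false claim. You assert that when $\Psi$ is a bijection, ``$\phi(a)$ begins with $a$ for every $a$.'' Counterexample: take $\Ak=\{0,1,2\}$, $y=(012)^\infty$, and $\phi(0)=01201$, $\phi(1)=20120$, $\phi(2)=12012$. Then $y=\phi(y)$, $p=d=3$, $\det\Sf_\phi=-5\ne0$, and $\Psi$ is the bijection $0\mapsto0$, $1\mapsto2$, $2\mapsto1$; yet $\phi(1)$ begins with $2$. What \emph{is} true is that the first letter of $\phi(a)$ equals $y_{\Psi(a)}$. Hence if $\phi$ is left-proper with common initial $c$, then $c$ sits at every phase in the image of $\Psi$, and once $\Psi$ is onto $\Z/p\Z$ this forces $y$ constant --- which is precisely your strong-coincidence argument with $L=0$. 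Since you already note that properness is a special case of strong coincidence, you can delete the erroneous claim and route the proper case through Step~3 directly.

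A second gap: ``since all $d$ letters occur among the $p$ residues, $p=d$'' is unjustified; injectivity of $\Psi$ only gives $d\le p$. The missing step is that $\Psi(y_{j+1})\equiv\Psi(y_j)+|\phi(y_j)|\pmod p$ together with injectivity of $\Psi$ forces $y_{j+1}$ to be determined by $y_j$; since the orbit of $y_0$ under this successor map must visit every letter, the map is a single $d$-cycle, whence $y$ has period exactly $d$ and $p=d$. Finally, as you yourself concede, the two decisive ingredients --- that non-injective $\Psi$ forces $\det\Sf_\phi=0$, and the passage from a genuine fixed point to the non-stationary $S$-adic tower --- are not carried out but deferred to the same references the paper already cites.
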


\begin{remark}[{\cite{AD1}}]
{\em 
It should be noted that the condition of being proper (or with a strong coincidence) 
is missing in the assumptions of \cite[Prop.\,2.1]{AD}. Without this condition the claim is false: e.g.,
take $\zeta_j = \zeta$ for all $j$ to be $\zeta(0) = 010,\ \zeta(1) = 101$.
}
\end{remark}

Another important condition we need is {\em recognizability}.
For a sequence of substitutions this notion was introduced 
in \cite{BSTY}; it generalizes {\em bilateral recognizability} of  Moss\'e \cite{Mosse} for a single substitution (see Sections 5.5 and 5.6 in \cite{Queff}). By the 
definition of the space $X_{\ba}$, for every $n\ge 1$, every $x\in X_{\ba}$ has a representation of the form
\be \label{recog}
x = T^k\bigl(\zeta^{[n]}(x')\bigr),\ \ \mbox{where}\ \ x'\in X_{\sig^n \ba},\ \ 0\le k < |\zeta^{[n]}(x_0)|,
\ee
where $\sig$ is the left shift on $\FrA^\N$.  The action of a substitution $\zeta$ is extended to two-sided sequences in $\Ak^\Z$ by
\be \label{extend}
\zeta(\ldots a_{-1}.a_0 a_1\ldots) = \ldots \zeta(a_{-1}).\zeta(a_0)\zeta(a_1)\ldots
\ee

\begin{defi} \label{def-recog}
A sequence of substitutions $\ba = (\zeta_j)_{j\ge 1}$ is said to be {\em recognizable} if the representation (\ref{recog}) is unique for all $n\ge 1$.
\end{defi}

The following is a special case of \cite[Theorem 4.6]{BSTY} that we need.

\begin{theorem}[{\cite{BSTY}}] \label{th-recog0}
Let $\ba= (\zeta_j)_{j\ge 1} \in \FrA^\N$ be such that $\det(\Sf_{\zeta_j})\ne 0$ for every substitution matrix and $X_{\ba}$ is aperiodic. Then $\ba$ is recognizable.
\end{theorem}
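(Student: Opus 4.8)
Since the statement is a special case of \cite[Theorem 4.6]{BSTY}, in the paper one simply cites it; the plan below is how I would reconstruct the argument, following Moss\'e's bilateral recognizability \cite{Mosse} adapted to directive sequences. First I would reduce full recognizability to a single-step statement, then run an overlap analysis in which aperiodicity rules out any misaligned desubstitution. I would begin by recording two elementary consequences of the hypotheses. Since $\det(\Sf_{\zeta_j})\ne 0$, the columns of $\Sf_{\zeta_j}$ — which are the abelianizations of $\zeta_j(1),\dots,\zeta_j(d)$ — are linearly independent, hence pairwise distinct, so each $\zeta_j$ is injective on letters. Secondly, aperiodicity propagates along $\sig$: if $y\in X_{\sig\ba}$ were periodic then $\zeta_1(y)$ would be a periodic point of $X_{\ba}$ (its factors occur in $\zeta^{[n]}(b)=\zeta_1(\zeta_2\circ\cdots\circ\zeta_n(b))$), contradicting aperiodicity; iterating, every $X_{\sig^m\ba}$ is aperiodic.

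\textbf{Reduction to a single step.} Next I would reduce recognizability of $\ba$ to \emph{single-step recognizability} of each $\zeta_m\colon X_{\sig^m\ba}\to X_{\sig^{m-1}\ba}$, namely uniqueness of the representation $x=T^{k}\zeta_m(x')$ with $x'\in X_{\sig^m\ba}$ and $0\le k<|\zeta_m(x'_0)|$. Because $\zeta^{[n]}(a)=\zeta_1\bigl(\zeta_2\circ\cdots\circ\zeta_n(a)\bigr)$, every level-$n$ cut point is also a $\zeta_1$-cut point, so a level-$n$ representation of $x$ determines, and is determined by, a $\zeta_1$-desubstitution of $x$ followed by a level-$(n-1)$ representation of the resulting point in $X_{\sig\ba}$. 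Unique single-step desubstitution together with induction on $n$ (applied to $\sig\ba$) then yields uniqueness of the representation (\ref{recog}) at all levels.

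\textbf{The single-step statement.} Fix $m$, write $\zeta=\zeta_m$ and $X=X_{\sig^{m-1}\ba}$ (aperiodic), and suppose some $x\in X$ had two representations $x=T^{k'}\zeta(x')=T^{k''}\zeta(x'')$. Each tiles $x$ by the blocks $\{\zeta(a):a\in\Ak\}$; let $C',C''\subset\Z$ be the corresponding cut-point sets. If $C'=C''$, the blocks of the two tilings coincide and injectivity of $\zeta$ on letters forces $x'=x''$ and $k'=k''$, so everything reduces to proving $C'=C''$. To exclude $C'\ne C''$, I would read $x$ left to right and track the local overlap configuration: the letter of the $C'$-block currently being read, the letter of the $C''$-block currently being read, and their relative offset, which lies in $\{0,\dots,L-1\}$ with $L=\max_{a}|\zeta(a)|$. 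There are finitely many such configurations, and the partial matching of a suffix of one block against a prefix of another (both from the finite set $\{\zeta(a)\}$) constrains how the two tilings may advance. The aim is to show that a misalignment, once present, propagates deterministically, so some configuration repeats; a repeated configuration then forces $x$ to be eventually periodic on a half-line with period $p>0$, and a subsequential limit of the forward shifts $T^{jp}x$ (all lying in the closed shift-invariant set $X$) produces a genuinely periodic point of $X$, contradicting aperiodicity. Hence $C'=C''$.

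\textbf{Main obstacle.} The hard part is the propagation-of-misalignment step: rigorously deducing eventual periodicity of $x$ from a repeated overlap configuration is precisely the content of Moss\'e's theorem, and the essential difference in the $S$-adic setting is that there is no single substitution to iterate, so the overlap analysis must be carried out level by level over the varying aperiodic systems $X_{\sig^m\ba}$, as in \cite{BSTY}. The invertibility hypothesis enters only through letter-injectivity, which pins down the tiling once the cut points agree, while aperiodicity does the rest; note that no minimality (or primitivity) is needed, since eventual periodicity already yields a periodic point in the closure. When all $\zeta_j$ coincide and are primitive, this recovers Moss\'e's recognizability.
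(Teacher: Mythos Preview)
The paper does not give its own proof of this statement; it simply records it as a special case of \cite[Theorem 4.6]{BSTY}. So there is nothing to compare against, but your reconstruction can still be assessed on its merits.

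Your preliminary observations (letter-injectivity from $\det\Sf_\zeta\ne0$, propagation of aperiodicity along the shift) and the reduction to single-step recognizability of each $\zeta_m:X_{\sig^m\ba}\to X_{\sig^{m-1}\ba}$ are correct and standard.

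The single-step argument, however, has a real gap. You assert that the overlap configuration $(a,b,\text{offset})$ ``propagates deterministically'' as one scans $x$, so that a repeated configuration forces $x$ to be eventually periodic. This is false: when the current $C'$-block $\zeta(a)$ ends, the label $a'$ of the next $C'$-block is the next letter of $x'$, and the configuration only pins down the first letter of $\zeta(a')$ (it must equal the letter supplied by the ongoing $C''$-block). Several letters $a'$ may share that first letter, so the transition is \emph{not} determined by the configuration alone. Consequently, recurrence of a configuration does not imply recurrence of the full future, and no eventual periodicity of $x$ follows; the contradiction with aperiodicity of $X$ evaporates.

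This is not merely a detail to be filled in: neither Moss\'e's proof nor the argument in \cite{BSTY} works via a one-level finite overlap automaton. Moss\'e's argument crucially \emph{iterates} the substitution so that a fixed misalignment becomes negligible compared with the growing block lengths $|\zeta^n(a)|$; the $S$-adic version in \cite{BSTY} likewise requires more than your one-step overlap picture. So while your high-level shape (reduce to one step; exclude misaligned tilings via aperiodicity) is reasonable, the mechanism you propose for the key step cannot succeed, and the ``main obstacle'' you name --- making the propagation rigorous --- is not the obstacle at all; the obstacle is that the propagation is genuinely non-deterministic, and a different idea is needed.
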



\smallskip

\subsubsection{Random $S$-adic systems} \label{sec:rand}
Let $\Om$ be a shift-invariant subspace of $\FrA^\N$, with an invariant measure $\P$. We suppose that the following properties hold:

\smallskip

{\bf (C1)} the system $(\Om,\sig,\P)$ is ergodic;

\smallskip

{\bf (C2)} the function $\ba \mapsto \log(1 + \|\Sf_{\zeta_1}\|)$ is integrable;

\smallskip

{\bf (C3)} there is a word $\bq$ admissible for sequences in $\Om$, such that all entries of the matrix $\Sf_\bq$ are positive and $\P([\bq])>0$.

\smallskip

{\bf (C4)} the set $\FrR$ of substitutions from $\FrA$ which appear in $\Om$ is {\em finite}, and
\be \label{assume1}
\forall\,\zeta\in \FrR,\ \ \det(\Sf_\zeta) \ne 0.
\ee

Observe that {\bf (C1)} and {\bf (C3)} imply {\bf (A1)} for $\P$-a.e.\ $\ba$, hence unique ergodicity and primitivity of the random $S$-adic system
$(X_\ba,T)$ holds $\P$-almost surely. Moreover, {\bf (A1)}, together
with {\bf (C4)}, yield recognizability for $\P$-a.e.\ $\ba$, by Theorem~\ref{th-recog0}.

\smallskip

\subsection{Spectral cocycle}
We recall the definition of the spectral cocycle introduced in \cite{BuSo20}.
Let $\zeta$ be a substitution on $d$ symbols, with a substitution matrix $\Sf_\zeta$, having a non-zero determinant.
Then the map
$\bt\mapsto \Sf_\zeta^\Tf \,\bt\ (\mbox{mod}\ \Z^d),\ \bt \in \T^d = \R^d/\Z^d$, is a toral endomorphism, which preserves the Haar measure $m_d$ on $\T^d$.
Here and below the symbol $\Sf^\Tf$ denotes the matrix transpose of $\Sf$.

Let us write the substitution map on symbols explicitly, as follows:
$$\zeta(b) = u_1^{b}\ldots u_{|\zeta(b)|}^{b},\ \ b\in \Ak.$$
Consider the matrix-valued function $\Cc_\zeta:  \R^d\to M_d(\C)$  (the space of complex $d\times d$ matrices) defined by 
\be \label{coc0}
\Cc_\zeta(\bt) = [\Cc_\zeta(t_1\ldots,t_d)]_{(b,c)} := \Bigl( \sum_{j\le |\zeta(b)|,\ u_j^{b} = c} \exp\bigl(-2\pi i \sum_{k=1}^{j-1} t_{u_k^{b}}\bigr)\Bigr)_{(b,c)\in \A^2},\ \ \ \bt\in \R^d.
\ee
Observe that  $\Cc_\zeta$ is $\Z^d$-periodic, so we obtain a continuous matrix-function on the torus, which we denote, by a slight abuse of notation, by the same letter:
$\Cc_\zeta: \T^d\to M_d(\C)$.
Note that $\Cc_\zeta(\bt)$ is a matrix-function whose entries are trigonometric polynomials in $d$ variables, with the following properties: 

\smallskip

(i) $\Cc_\zeta(0) = \Sf^{\sf T}_\zeta$, and every entry  of the matrix $\Cc_\zeta(\bt)$ is less or equal to the corresponding entry of $\Sf^\Tf_\zeta$ in absolute value;

\smallskip

(ii) all the coefficients of the trigonometric polynomials are  0's and 1's, and in every row, any given monomial appears at most once;

\smallskip

(iii)  the substitution is uniquely determined by 
$\Cc_\zeta$. 

\smallskip

(iv) the cocycle property: for any substitutions $\zeta_1$ and $\zeta_2$ on the same alphabet we have
$$
\Cc_{\zeta_1\circ \zeta_2}(\bt) = \Cc_{\zeta_2}(\Sf^{\sf T}_{\zeta_1}\bt)\Cc_{\zeta_1}(\bt),
$$
which is verified by a  direct computation.

\begin{defi} \label{def-cocycle2}
Let $(\Om,\sig,\P)$ be a measure-preserving shift on $\FrA^\N$ satisfying the properties {\bf (C1)--(C4)}. 
Consider the skew product transformation $\Ga:\, \Om \times \T^d \to \Om \times \T^d$ defined by
\be \label{skew1}
\Ga(\ba, \bt) = \bigl(\sig \ba, \Sf^\Tf_{\zeta_1} \bt \, ({\rm mod}\ \Z^d)\bigr),\ \ \mbox{where}\ \ \ba = (\zeta_n)_{n\ge 1}\ \ \mbox{and}\ \ \bt \in \T^d = \R^d/\Z^d.
\ee
Let $\Cc(\ba,\bt) = \Cc_{\zeta_1}(\bt)$ be defined by \eqref{coc0},
where $\ba = (\zeta_n)_{n\ge 1}$.
Then
\begin{equation}\label{def-cc}
\Cc_{_\Om}((\ba,\bt),n):= \Cc(\Ga^{n-1}(\ba,\bt))\cdot \ldots \cdot \Cc(\ba,\bt)
\end{equation}
is a complex matrix cocycle over the skew product system $(\Om\times\T^d, \P\times m_d, \Ga)$.
\end{defi}

We also need the ``untwisted'' cocycle (the analog of the Rauzy-Veech cocycle from the theory of IET's), which is simply $\Cc_{_\Om}((\ba,0),n)$:
 \be \label{Rauzy-Veech1}
 \Aa(\ba):= \Sf_{\zeta_1}^{\sf T};\ \ \Aa(\ba,n) := \Aa(\sig^{n-1}\ba)\cdot \ldots \cdot \Aa(\ba),
 \ee
 where $\ba = (\zeta_j)_{j=1}^\infty\in \Om$.
 This is a $GL(d,\R)$-cocycle over the ergodic system $(\Om,\sig,\P)$. Property {\bf (C2)} implies that
 the
 corresponding Lyapunov exponent exists a.e.:
 \be \label{RV2}
 \lam:= \lim_{n\to \infty} \frac{1}{n} \log\|\Aa(\ba,n)\|\ \ \mbox{for $\P$-a.e.\ $\ba\in \Om$}.
 \ee
In fact, $\lam>0$, which can be easily deduced from assumption {\bf (C3)}.

\subsection{Lyapunov exponents of the spectral cocycle}

Consider the pointwise upper Lyapunov exponent (which is defined everywhere and is independent of the matrix norm):
\begin{equation} \label{Lyap1}
{\chi}_{\ba,\bt}^+:= \limsup_{n\to \infty} \frac{1}{n} \log \|\Cc_{_\Om}((\ba,\bt),n)\|,\ \ \bt\in \T^d.
\end{equation}
Note  that the
property (i) of $\Cc_\zeta$ above implies
\be \label{Lyap110}
\chi^+_{\ba,\bt} \le \lam\ \ \mbox{for all}\ \ba\in \Om,\ \bt\in \T^d.
\ee

Now assume that $(\Om\times\T^d, \P \times m_d, \Ga)$ is ergodic. Then by theorems of Furstenberg-Kesten \cite{FK} and Kingman \cite{Kingman} there is a ``global'' Lyapunov exponent
\begin{eqnarray} \nonumber
\chi(\Cc_{_\Om}) & = & \lim_{n\to \infty} \frac{1}{n} \log \|\Cc_{_\Om}((\ba,\bt),n)\|,\ \ \mbox{for $(\P\times m_d)$-a.e.}\ (\ba,\bt)\\[1.2ex]
& = &  \inf_k \frac{1}{k} \int_{\Om}\int_{\T^d} \log\|\Cc_{_\Om}((\ba,\bt),k)\|\,dm_d(\bt)\,d\P(\ba). \label{Lyap12}
\end{eqnarray}

The value of the Lyapunov exponent does not depend on the norm; often it will be convenient to use the Frobenius norm of a matrix, defined by
$${\|(a_{ij})_{i,j}\|}^2_{\rm F} = \sum_{i,j} |a_{ij}|^2.$$ 

\begin{lemma} \label{lem:lower}
For any $k\ge 1$, the function $(\ba,\bt) \mapsto \log\|\Cc(\ba,\bt),k)\|$ is integrable, and
$$
\int_{\Om}\int_{\T^d} \log\|\Cc_{_\Om}((\ba,\bt),k)\|\,dm_d(\bt)\,d\P(\ba)\ge 0.
$$
Thus, $\chi(\Cc_{_\Om})\ge 0$.
\end{lemma}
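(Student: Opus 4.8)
The plan is to reduce the whole statement to a claim about a single entry of the cocycle matrix, viewed as an exponential polynomial in $\bt$, and then to apply Jensen's formula. First I would record the algebraic structure. By property (ii), each entry of $\Cc_\zeta(\bt)$ is a polynomial in the variables $z_\ell=e^{-2\pi i t_\ell}$ with coefficients in $\{0,1\}$; and the cocycle identity (iv), together with the computation $\Ga^{m}(\ba,\bt)=(\sig^m\ba,\Sf^\Tf_{\zeta_m}\cdots\Sf^\Tf_{\zeta_1}\bt)$, shows that every entry of the product $\Cc_{_\Om}((\ba,\bt),k)$ is a Laurent polynomial in the $z_\ell$ with nonnegative integer coefficients (sums and products of such objects stay in this class, and the integer linear change of frequencies induced by the matrices $\Sf^\Tf_{\zeta_j}$ keeps the exponents in $\Z^d$). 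Evaluating at $\bt=0$ gives $\Cc_{_\Om}((\ba,0),k)=\Sf^\Tf_{\zeta_k}\cdots\Sf^\Tf_{\zeta_1}=(\Sf^{[k]})^\Tf$, whose entries are exactly the coefficient-sums of the corresponding polynomials. Since each $\zeta_j\in\FrR\subset\FrA$ has no zero column in its substitution matrix, the product $\Sf^{[k]}$ has no zero column, hence is nonzero, so some entry $P(\bt)$ satisfies $P(0)\ge 1$; in particular $P$ is not the zero polynomial.

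Next I would settle integrability using the finiteness assumption {\bf (C4)}. Because $\Cc_{_\Om}((\ba,\bt),k)$ depends only on $(\zeta_1,\dots,\zeta_k)$ and $\FrR$ is finite, there are only finitely many matrix-functions $\bt\mapsto\Cc_{_\Om}((\ba,\bt),k)$; each is continuous on the compact torus, so (working with the Frobenius norm) $\log\|\Cc_{_\Om}((\ba,\bt),k)\|_{\rm F}$ is bounded above, which handles the positive part. For the negative part I would use $\|\Cc_{_\Om}((\ba,\bt),k)\|_{\rm F}\ge|P(\bt)|$, whence $(\log\|\cdot\|_{\rm F})^-\le(\log|P|)^-$; since $P$ is a nonzero integer Laurent polynomial, $\log|P|$ is integrable on $\T^d$ (its zero set is $m_d$-null and $\log|P|$ is locally integrable). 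Averaging over the finitely many values of $(\zeta_1,\dots,\zeta_k)$ with weights $\P([\zeta_1\ldots\zeta_k])$ then gives integrability of $(\ba,\bt)\mapsto\log\|\Cc_{_\Om}((\ba,\bt),k)\|$.

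For the nonnegativity I would invoke Jensen's formula. For the chosen entry, $\int_{\T^d}\log\|\Cc_{_\Om}((\ba,\bt),k)\|_{\rm F}\,dm_d(\bt)\ge\int_{\T^d}\log|P(\bt)|\,dm_d(\bt)=m(P)$, the logarithmic Mahler measure of $P\in\Z[z_1^{\pm1},\dots,z_d^{\pm1}]$. The key point is that $m(P)\ge 0$ for every nonzero integer Laurent polynomial, which I would prove by induction on $d$: after clearing a monomial, write $P$ as a one-variable polynomial in $z_d$ over $\Z[z_1^{\pm1},\dots,z_{d-1}^{\pm1}]$ and integrate in $z_d$ first by the classical one-variable Jensen formula, obtaining $\int_\T\log|P|\,dt_d=\log|a_{\rm lead}(w)|+\sum_i\log^+|\alpha_i(w)|\ge\log|a_{\rm lead}(w)|$ for a.e.\ $w=(z_1,\dots,z_{d-1})$; integrating over $w$ gives $m(P)\ge m(a_{\rm lead})\ge 0$ by the inductive hypothesis, the base case $d=0$ being $m(c)=\log|c|\ge 0$ for a nonzero integer $c$. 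Averaging the resulting inequality $\int_{\T^d}\log\|\cdot\|_{\rm F}\,dm_d\ge 0$ over $\ba$ against the nonnegative measure $d\P$ yields the displayed inequality, and then $\chi(\Cc_{_\Om})=\inf_k\frac1k\int_\Om\int_{\T^d}\log\|\Cc_{_\Om}((\ba,\bt),k)\|\,dm_d\,d\P\ge 0$ via \eqref{Lyap12}. The main obstacle is precisely this nonnegativity of the multivariate Mahler measure: one must set up the one-variable Jensen step and justify the interchange of integration on the (measure-zero) loci where $a_{\rm lead}(w)$ degenerates or roots approach the unit circle.
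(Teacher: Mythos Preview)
Your argument is correct and rests on the same idea as the paper's: reduce to the nonnegativity of the logarithmic Mahler measure of a nonzero integer Laurent polynomial. The implementations differ slightly. You pick a single nonzero entry $P$ of the cocycle matrix and use the inequality $\|\Cc_{_\Om}((\ba,\bt),k)\|_{\rm F}\ge |P(\bt)|$, concluding that $\int_{\T^d}\log\|\cdot\|_{\rm F}\,dm_d\ge \mfr{m}(P)\ge 0$. The paper instead observes that $\|\Cc_{\zeta^{[k]}}(\bt)\|_{\rm F}^{2}$ is itself, after clearing a monomial factor, a single integer polynomial $P_{\zeta^{[k]}}$ (since each $|a_{ij}|^2=a_{ij}\overline{a_{ij}}$ is an integer Laurent polynomial in $z_1,\dots,z_d$), so one has the \emph{equality} $\int_{\T^d}\log\|\Cc_{\zeta^{[k]}}(\bt)\|_{\rm F}^{2}\,dm_d = \mfr{m}(P_{\zeta^{[k]}})$, and then cites the standard fact $\mfr{m}(P)\ge 0$ for integer polynomials. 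The paper's route is marginally cleaner because it avoids the detour through a single entry and the attendant inequality; your route is a bit more hands-on and, as a bonus, you supply the inductive Jensen-formula proof of $\mfr{m}(P)\ge 0$ rather than appealing to the literature. Either way the essential content is the same.
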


\begin{proof} This is an immediate consequence of \cite[Lemma 2.3]{BuSo22}, but we recall the argument for the reader's convenience. By Definition~\ref{coc0},  we obtain that for any substitution $\zeta$ on $\Ak$, the function ${\|\Cc_\zeta(\bt)\|}_{\rm F}^2$ is a sum of the squares of absolute values of
polynomials in $d$ variables $z_j = e^{-2\pi i t_j}$. This expression can be rewritten as a (positive) polynomial in the variables $z_j^{\pm 1}$. Multiplying it by
 $z_j^{\ell_j}$ for some $\ell_j\ge 0$ to get rid of the negative powers, we obtain that 
 $$
 \log {\|\Cc_\zeta(\bt)\|}_{\rm F}^2 = \log|P_\zeta(z_1,\ldots,z_d)|
 $$
 for some polynomial $P_\zeta$ with integer coefficients, It follows that 
 $$
 \int_{\T^d} \log {\|\Cc_\zeta(\bt)\|}_{\rm F}^2\,dm_d(\bt) = \mfr{m}(P_\zeta)\ge 0,
$$
 where $\mfr{m}(P_\zeta)$ is the logarithmic Mahler measure of a polynomial, see \cite{Boyd}. Since
 $$
 \Cc_{_\Om}((\ba,\bt),k) = \Cc_{\zeta^{[k]}}(\bt),
 $$
 where $\zeta^{[k]}$ corresponds to the first $k$ letters of $\ba$, the claim follows.
\end{proof}

It is immediate from \eqref{Lyap110} that $\chi(\Cc_\Om) \le  \lam$; however, this is not sharp.
The next lemma follows from \cite[Corollary 4.5]{BuSo20}, but we give a direct proof here, inspired by the argument in  \cite[Theorem 3.29]{BG2M}.
 
\begin{lemma} \label{lem:upper} Suppose that {\bf (C1)--(C4)} are satisfied and the skew product $(\Om\times\T^d, \P \times m_d, \Ga)$ is ergodic. Then
$\chi(\Cc_\Om) \le  \half\lam$.
\end{lemma}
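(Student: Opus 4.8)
The plan is to work with the infimum formula \eqref{Lyap12} for $\chi(\Cc_{_\Om})$ together with the Frobenius norm (legitimate, since the Lyapunov exponent is norm-independent), and to evaluate the $L^2$-average of the twisted cocycle over the torus \emph{exactly}. Fix $k\ge 1$ and recall from the proof of Lemma~\ref{lem:lower} that $\Cc_{_\Om}((\ba,\bt),k) = \Cc_{\zeta^{[k]}}(\bt)$, where $\zeta^{[k]}$ encodes the first $k$ letters of $\ba$. Writing $\zeta^{[k]}(b) = u_1^b\ldots u_L^b$ with $L = |\zeta^{[k]}(b)|$, the entry $(\Cc_{\zeta^{[k]}}(\bt))_{bc}$ is, by \eqref{coc0}, a sum of the characters $\exp(-2\pi i \langle \mathbf{v}_{j-1}, \bt\rangle)$ over the positions $j$ with $u_j^b = c$, where $\mathbf{v}_{j-1}$ is the abelianization of the prefix $u_1^b\ldots u_{j-1}^b$. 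As $j$ ranges over $1,\ldots,L$ these vectors are pairwise distinct (their total degree strictly increases), which is exactly property (ii) above.

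First I would use this distinctness together with orthogonality of characters on $\T^d$ to evaluate the contribution of row $b$ exactly:
\[
\sum_{c}\int_{\T^d} \bigl|(\Cc_{\zeta^{[k]}}(\bt))_{bc}\bigr|^2\,dm_d(\bt) = |\zeta^{[k]}(b)|,
\]
the cross terms vanishing because distinct characters are orthogonal. Summing over $b$ yields
\[
\int_{\T^d}{\|\Cc_{\zeta^{[k]}}(\bt)\|}_{\rm F}^2\,dm_d(\bt) = \sum_{b}|\zeta^{[k]}(b)| = \sum_{b,c}\Sf^{[k]}(c,b),
\]
i.e. the sum of all entries of the nonnegative matrix $\Sf^{[k]}$. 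Since the entrywise sum of a nonnegative matrix is comparable, up to a factor depending only on $d$, to its operator norm, and $\Aa(\ba,k) = (\Sf^{[k]})^\Tf$ gives $\|\Aa(\ba,k)\| = \|\Sf^{[k]}\|$, there is a constant $\const_d$ with $\sum_{b,c}\Sf^{[k]}(c,b)\le \const_d\,\|\Aa(\ba,k)\|$.

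Next I would extract the factor $\tfrac12$ via Jensen's inequality. Since $\log\|\cdot\|_{\rm F} = \tfrac12\log\|\cdot\|_{\rm F}^2$ and $\log$ is concave,
\[
\int_{\T^d}\log{\|\Cc_{\zeta^{[k]}}(\bt)\|}_{\rm F}\,dm_d(\bt)
\le \tfrac12\log\!\int_{\T^d}{\|\Cc_{\zeta^{[k]}}(\bt)\|}_{\rm F}^2\,dm_d(\bt)
\le \tfrac12\log\bigl(\const_d\,\|\Aa(\ba,k)\|\bigr).
\]
Integrating over $\ba\in\Om$ (the integrand is integrable by Lemma~\ref{lem:lower}), dividing by $k$, and invoking \eqref{Lyap12} together with the Furstenberg--Kesten formula $\lam = \lim_k \tfrac1k\int_\Om \log\|\Aa(\ba,k)\|\,d\P(\ba)$ (valid by {\bf (C2)}), the term $\tfrac1k\log\const_d$ vanishes in the limit, and we obtain $\chi(\Cc_{_\Om})\le \tfrac12\lam$.

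The main obstacle is the exact $L^2$ identity for the torus average: everything hinges on the monomials in each row of $\Cc_{\zeta^{[k]}}$ being pairwise distinct with unit coefficients, so that Parseval collapses the integral to a count of letters. Once this is in place, the passage from $\|\cdot\|_{\rm F}^2$ to $\|\cdot\|_{\rm F}$ by concavity is precisely what produces the sharp factor $\tfrac12$, and the comparison of the entry-sum with $\|\Aa(\ba,k)\|$ costs only a dimensional constant invisible to the Lyapunov exponent.
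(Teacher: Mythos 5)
Your proof is correct, and it shares the two essential ingredients with the paper's argument: the exact Parseval/orthogonality identity $\int_{\T^d}{\|\Cc_{\zeta^{[k]}}(\bt)\|}_{\rm F}^2\,dm_d(\bt)=\sum_b|\zeta^{[k]}(b)|$ (your justification via the strictly increasing total degree of the prefix abelianizations is a clean way to see property (ii)), and Jensen's inequality to convert $\log\|\cdot\|_{\rm F}$ into $\tfrac12\log\|\cdot\|_{\rm F}^2$, which is where the factor $\tfrac12$ comes from. Where you genuinely diverge is in the order of operations at the end. The paper applies Jensen to the \emph{double} integral over $\Om\times\T^d$, which leaves it facing $\tfrac{1}{2k}\log\int_\Om\|\Aa(\ba,k)\|_{1,1}\,d\P$; since this ``annealed'' quantity need not converge to $\tfrac12\lam$, the paper has to invoke Egorov's theorem, split $\Om$ into a small exceptional set (handled by the crude bound $\|\Cc\|_{\rm F}\le C_1^k$) and its complement, and then let $\eps\to 0$. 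You instead apply Jensen only to the inner $\T^d$-integral, keep the logarithm inside the $\Om$-integral, and then use the integral form of Furstenberg--Kesten/Kingman, $\lam=\lim_k\tfrac1k\int_\Om\log\|\Aa(\ba,k)\|\,d\P$, which holds under {\bf (C1)}--{\bf (C2)} (and is uniform here since {\bf (C4)} makes the alphabet finite); the dimensional constant from comparing $\|\cdot\|_{1,1}$ with the chosen norm of $\Aa(\ba,k)$ is killed by the $\tfrac1k$. This ordering is exactly what makes the Egorov detour unnecessary, so your version is somewhat shorter and arguably cleaner, at the cost of explicitly invoking the $L^1$-convergence statement of the subadditive ergodic theorem rather than only the a.e.\ statement \eqref{RV2}. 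Both arguments are valid and give the same bound.
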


\begin{proof}
We have by Jensen's inequality:
\begin{eqnarray} \nonumber
\Ik_k:=\frac{1}{k} \int\limits_{\Om}\int\limits_{\T^d} \log{\|\Cc_{_\Om}((\ba,\bt),k)\|}_{\rm F}\,dm_d(\bt)\,d\P(\ba) & = & 
\frac{1}{2k} \int\limits_{\Om}\int\limits_{\T^d} \log{\|\Cc_{_\Om}((\ba,\bt),k)\|}^2_{\rm F}\,dm_d(\bt)\,d\P(\ba) \\
& \le & \frac{1}{2k} \log \int\limits_{\Om}\int\limits_{\T^d} {\|\Cc_{_\Om}((\ba,\bt),k)\|}^2_{\rm F}\,dm_d(\bt)\,d\P(\ba). \label{cri}
\end{eqnarray}
By the definition of Frobenius norm and Parseval's formula,
\begin{eqnarray*}
\int_{\T^d} {\|\Cc_{_\Om}((\ba,\bt),k)\|}^2_{\rm F}\,dm_d(\bt) & = & \sum_{(b,c)\in \Ak^2} \int_{\T^d} |(\Cc_{_\Om}((\ba,\bt),k))_{(b,c)}|^2\,dm_d(\bt) \\
& = & \sum_{(b,c)\in \Ak^2} \sum_{\bn\in \Z^d} |(\what\Cc_{_\Om}((\ba,\cdot),k))_{(b,c)}(\bn)|^2 \\
& = & \sum_{(b,c)\in \Ak^2} \sum_{\bn\in \Z^d} |(\what\Cc_{_\Om}((\ba,\cdot),k))_{(b,c)}(\bn)|,
\end{eqnarray*}
since all the coefficients are 0's and 1's. Moreover, the sum of all the coefficients is exactly the sum of all the entries of the corresponding substitution matrix. Thus we obtain
$$
\int_{\T^d} {\|\Cc_{_\Om}((\ba,\bt),k)\|}^2_{\rm F}\,dm_d(\bt)  = \sum_{b\in \Ak} |\zeta_\ba^{[k]}(b)|= \sum_{(b,c)\in \Ak^2} \Sf_{\zeta_\ba^{[k]}}(b,c) = \|A(\ba,k)\|_{1,1},
$$
where
$$
{\bigl\|(a_{ij})_{i,j}\bigr\|}_{1,1} = \sum_{i,j} |a_{ij}|.
$$
By Egorov's Theorem and \eqref{RV2}, for any $\eps>0$ there exists $E_\eps\subset \Om$, with $\P(E_\eps) < \eps$, and $N=N(\eps)\in \N$, such that for all $k\ge N$ holds
$$
{\bigl\|\Aa(\ba,k)\bigr\|}_{1,1} \le e^{k(\lam+\eps)}\ \ \mbox{for all}\ \ba\in \Om\setminus E_\eps.
$$
On the other hand, for all $\ba\in \Om$ and all $\bt\in \T^d$, we have
$$
{\|\Cc(\ba,\bt)\|}_{\rm F} \le C_1:= \max_{\zeta\in \FrR}{\|\Sf_\zeta\|}_{\rm F} \implies  {\|\Cc_{_\Om}((\ba,\bt),k)\|}_{\rm F}\le C_1^k,
$$
since the Frobenius norm is sub-multiplicative.
Thus, splitting the integral over $\Om$ in \eqref{cri} into the sum of integrals over $\E_\eps$ and over its complement, we obtain
$$
\Ik_k \le \eps \log C_1 + \textstyle{\half} (\lam+\eps),\ \ k\ge N(\eps).
$$
Letting $\eps\to 0$ yields the desired inequality $ \chi(\Cc_{_\Om})\le \half \lam$, in view of \eqref{Lyap12}.
\end{proof}

\subsection{Spectral measures, cylindrical functions} \label{spec1}
Now we return to the random $S$-adic system $(X_\ba,T)$ under the assumptions {\bf (C1)--(C4)}.
As we have seen, they imply unique ergodicity of $(X_\ba,T)$ for $\P$-a.e.\ $\ba$. Denote the unique invariant Borel probability measure by $\mu_\ba$.
We are interested in spectral properties of the measure-preserving system $(X_{\ba},T, \mu_\ba)$. 
Here and below, by {\em spectral properties of a measure-preserving $\Z$-action} we always mean the properties of the corresponding unitary operator ---
the Koopman operator, and similarly, for the $\R$-action --- the properties of the associated unitary group.

Recall that, given a test function $f\in L^2(X_\ba,\mu_\ba)$,
the {\em spectral measure} $\sig_f$ is a finite positive measure on the unit circle $\T\cong \R/\Z$, determined by its Fourier coefficients:
$$
\what\sig_f(-k) = \int_0^1 e^{2\pi i k\om}\,d\sig_f(\om) = \langle U_T^k f\,,\,f\rangle,\ \ \ k\in \Z,
$$
where $U_T: f\mapsto f\circ T$ is the Koopman operator on $L^2(X_\ba,\mu_\ba)$ and $\langle \cdot,\cdot\rangle$ denotes the inner product in $L^2 $.
It is well-known that, given a countable complete (i.e., having
a dense linear span) set $\{f_n\}_{n=1}^\infty$ of unit vectors in $L^2$, the maximal spectral type of $U_T$, equivalently, of the measure-preserving system, is
given by
\be \label{sigmax}
[\sig_{\max}]=\textstyle{\bigl[\sum_{n=1}^\infty 2^{-n} \sig_{f_n}\bigr]}.
\ee
The symbol $[\nu]$ denotes the {\em type} of the measure $\nu$, that is, the equivalence class of $\nu$ under the relation of mutual absolute continuity.

The most basic functions in the symbolic space $X_\ba$ are the functions depending only on the symbol $x_0$. We call them {\em cylindrical functions of
level} 0. Of course, their span is only $d$-dimensional, where $d$ is the number of symbols, so in order to ``capture'' the maximal spectral type we need
functions depending on an arbitrary finite number of symbols. In order to do this efficiently, we make use of renormalization.

Recall that the action of substitutions $\zeta\in \FrR$ is extended to $\Ak^\Z$ in such a way that $\zeta(x_0)$ starts from the $0$-th position, see
\eqref{extend}. Consider the subsets $\zeta^{[n]}[a]$
of $X_\ba$, where $a\in \Ak$ and $[a]$ is the cylinder set of sequences having $x_0=a$. We emphasize that $\zeta^{[n]}[a]$ may be a proper subset of $[\zeta^{[n]}(a)]$.
Recognizability implies that 
$$
\Pk_n = \left\{T^i(\zeta^{[n]}[a]): a\in \Ak,\ 0 \le i < |\zeta^{[n]}(a)|\right\},\ \ n\ge 1,
$$
is a sequence of Kakutani-Rokhlin partitions, which generates the Borel $\sig$-algebra on $X_\ba$, see \cite[Lemma 6.3]{BSTY}. Functions that are 
measurable with respect to the partition $\Pk_\ell$, but not $\Pk_{\ell-1}$, are called {\em cylindrical functions of level} $\ell$. Thus a sequence $f_n$
of cylindrical functions of levels $\ell_n\to \infty$, having the property 
that their linear span contains all cylindrical functions of all levels, generates the maximal spectral type
of the $S$-adic system $(X_\ba,T, \mu_\ba)$, as in \eqref{sigmax}, for $\P$-a.e.\ $\ba$.

\subsection{Suspension flows and their spectrum} \label{sec:spec}
As in \cite{BuSo22}, we
also need to consider {\em suspension flows} (or {\em special flows}, in another terminology), over $S$-adic systems, for which the roof function
$\phi$ is piecewise-constant and depends only on the symbol $x_0$ of $x\in X_\ba$:
\be \label{roof}
\phi(x) = s_{x_0}\ge 0.
\ee
Consider the partition of $X_\ba$  into cylinder sets according to the value of $x_0$: $X_{\ba} = \bigsqcup_{a\in \Ak} [a] $.
Denote
by $(\Xx_\ba^{\vec{s}}, h_\tau, \wt{\mu}_\ba)$ the suspension flow over $(X_{\ba},T, \mu_\ba)$,  corresponding to the piecewise-constant roof function 
\eqref{roof}, where $\vec{s}=(s_a)_{a\in \Ak}\in \R^d_+$. 
The suspension flow is defined on the following union:
$$
\Xx_\ba^{\vec{s}} = \bigcup_{a\in \Ak} [a]\times [0,s_a],
$$
with the invariant measure $\wt\mu_\ba$ equal to the normalized sum of $\mu_\ba|_{[a]} \times m|_{[0,s_a]}$. Here $m$ denotes the Lebesgue measure
on $\R$.
We define
 a {\em Lip-cylindrical function of level 0} by the formula: 
\be \label{fcyl2}
F(x,\tau)=\sum_{a\in \Ak} \One_{[a]}(x) \cdot \psi_a(\tau),\ \ \mbox{with}\ \ \psi_a\in \Lip[0,s_a],
\ee
where $\Lip$ is the space of Lipschitz functions.
Further, $F$ is a {\em Lip-cylindrical function of level $\ell\ge 1$} if
\be \label{fcyl3}
F(x,\tau)=\sum_{a\in \Ak} \One_{\zeta^{[\ell]}[a]}(x) \cdot \psi^{(\ell)}_a(\tau),\ \ \mbox{with}\ \ \psi^{(\ell)}_a\in \Lip[0,s^{(\ell)}_a],
\ee
where
$$
\vec{s}^{\,(\ell)}= (s^{(\ell)}_a)_{a\in \Ak}:= \Sf_{\zeta^{[\ell]}}^{\sf T} \vec{s}.
$$
The union of spaces of Lip-cylindrical functions of level $\ell$ over $\ell\in \N$, is dense in $L^2(\Xx_\ba^{\vec{s}},\wt \mu_\ba)$, 
see \cite[Lemma 7.1]{BuSo20}, and hence the maximal spectral type
of $(\Xx_\ba^{\vec{s}},h_\tau,\wt \mu_\ba)$ can be recovered from the spectral measures $\sig_F$, 
where $F$ runs over Lip-cylindrical functions of an arbitrary level.

Recall that, given a probability measure-preserving flow $h_\tau$ on a space $\Xx$ and a test function $F\in L^2(\Xx)$, the spectral measure $\sig_F$ is a
finite positive Borel measure on $\R$ determined by the formula
$$
\what \sig_F(-\tau) = \int_\R e^{2\pi i \om \tau}\,d\sig_F(\om) = \langle F\circ h_\tau, F\rangle,\ \ \ \tau\in \R.
$$
It is well-known that spectral properties of a (general, probability-preserving) $\Z$-action on $(X,\mu)$
are closely related to those of the suspension $\R$-action $(\wt X,\wt\mu)$,
corresponding to the constant-1 roof function, in particular, when we have $\vec s = \one$ in the case of an $S$-adic system.
In fact, 
for  $f\in L^2(X,\mu)$ consider the
function $F\in L^2(\wt{X},\wt{\mu})$  defined by
$F(x,\tau)=f(x)$. Then
the following relation holds between the spectral measures $\sigma_F$  on $\R$ and $\sigma_f$ on $\T$:
\be \label{eq-spec}
d\sig_F(\om)=\left(\frac{\sin(\pi \om)}{\pi \om}\right)^2 \cdot d\sig_f(e^{2\pi i \om}),\ \ \om\in \R.
\ee
See \cite[Prop.\,1.1]{DL} for a general treatment and \cite[Lem.\,5.6]{BerSol} for our specific case. 

\medskip

We next state a result from \cite{BuSo20}, from which we quote only the relevant part. It provides a lower bound for the pointwise local dimension of 
a spectral measure, defined  by
$$
\und{d}(\sig_F,\om):= \liminf_{r\to 0+} \frac{\log \sig_F(B(\om,r))}{\log r}.
$$

\begin{theorem}[see {\cite[Theorems 4.3 and 4.6 ]{BuSo20}}] \label{th-BuSo}
Let $(\Om,\sig,\P)$ be a measure-preserving shift on $\FrA^\N$ satisfying the properties {\bf (C1)--(C4)}, such that 
for $\P$-a.e.\ $\ba\in \Om$, the $S$-adic system $(X_\ba,T)$ is aperiodic, and hence it is 
 recognizable and uniquely ergodic for $\P$-a.e.\ $\ba\in \Om$.
Denote by $\mu_\ba$ the unique invariant
probability measure.
Fix any $\vec s\in \R^d_+$ and consider the suspension flow $(\Xx_\ba^{\vec s}, h_\tau, \wt \mu_\ba)$ over $(X_\ba,T,\mu_\ba)$, 
with the piecewise constant roof function determined by $\vec s\in \R^d_+$. Then for $\P$-a.e.\ $\ba\in \Om$ the following holds:

Let $F$ be a nonzero Lip-cylindrical function on $\Xx_\ba^{\vec s}$ (of any level) and let
$\sig_F$ be the corresponding spectral measure for the flow. Then the lower local dimension of $\sig_F$ satisfies for all $\om\in \R$:
\be \label{dim-est}
\und{d}(\sig_F,\om) \ge 2 \min\Bigl\{1, 1- \frac{\chi^+_{\ba,\om\vec s}}{\lam}\Bigr\},
\ee
where $\lam$ is from \eqref{RV2} and $\chi^+_{\ba,\om\vec s}$ is the local upper Lyapunov exponent of the spectral cocycle, defined in
\eqref{Lyap1}, with $\bt =\om\vec s$ mod $\Z^d$.
\end{theorem}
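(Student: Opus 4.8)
The plan is to deduce \eqref{dim-est} from an upper bound on $\sig_F(B(\om,r))$ as $r\to0+$ and to control that bound through twisted ergodic integrals of the suspension flow. First I would set, for $x\in X_\ba$ and $R>0$,
\[
S_R(F,\om;x):=\int_0^R F\bigl(h_\tau(x,0)\bigr)\,e^{-2\pi i\om\tau}\,d\tau .
\]
Expanding $\int_{X_\ba}|S_R(F,\om;x)|^2\,d\mu_\ba(x)$ in terms of $\what\sig_F$ gives the identity
\[
\int_{X_\ba}\bigl|S_R(F,\om;x)\bigr|^2\,d\mu_\ba(x)=\int_\R\Bigl(\tfrac{\sin\pi(\xi-\om)R}{\pi(\xi-\om)}\Bigr)^{2}d\sig_F(\xi),
\]
and bounding the nonnegative integrand below by a constant multiple of $R^2$ on $B(\om,\tfrac1{2R})$ would yield the Hof-type inequality $\sig_F(B(\om,r))\le C\,r^2\int_{X_\ba}|S_R(F,\om;x)|^2\,d\mu_\ba(x)$ with $R=\tfrac1{2r}$. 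This reduces everything to the exponential growth rate in $R$ of the twisted integrals.

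Next I would renormalize $S_R$ along the Kakutani--Rokhlin towers $\Pk_n$, which are well defined because recognizability holds $\P$-a.e. For a level-$0$ Lip-cylindrical $F$ as in \eqref{fcyl2} set $\vec v(\om)=\bigl(\int_0^{s_a}\psi_a(\tau)e^{-2\pi i\om\tau}\,d\tau\bigr)_{a\in\Ak}\in\C^d$, a vector bounded uniformly in $\om$. The cocycle property (iv) of $\Cc_\zeta$ says precisely that the twisted integral of $F$ over one complete level-$n$ tower $\zeta^{[n]}[a]$ is the $a$-th entry of $\Cc_{_\Om}((\ba,\om\vec s),n)\,\vec v(\om)$: peeling off $\zeta_1$ and replacing the phase parameter $\bt$ by $\Sf^{\sf T}_{\zeta_1}\bt$ at each level reproduces the matrix product \eqref{def-cc} evaluated at $\bt=\om\vec s$. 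Decomposing the orbit segment $[0,R]$ into complete level-$n$ towers plus incomplete towers at the two ends, and expanding each incomplete piece recursively (at level $j$ the error is the contribution of a prefix of a single $\zeta_j$-image, modulated by the cocycle accumulated from the base), would give, for $\mu_\ba$-a.e.\ $x$,
\[
\bigl|S_R(F,\om;x)\bigr|\ \le\ C\sum_{j=0}^{n}\bigl\|\Cc_{_\Om}\bigl((\ba,\om\vec s),j\bigr)\bigr\|,
\]
where $n$ is the largest level whose tower fits inside $[0,R]$ and $C$ depends only on $\|\vec v(\om)\|_\infty$ and $\FrR$. A level-$\ell$ cylindrical $F$ reduces to this after advancing the starting level by $\ell$.

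It then remains to match $n$ to $R$. The heights of the level-$n$ tower are the coordinates of $\Aa(\ba,n)\vec s=\Sf^{\sf T}_{\zeta^{[n]}}\vec s$, and by \eqref{RV2} together with the positivity furnished by {\bf (A1)} they all grow like $e^{n\lam}$; hence the fitting level obeys $n\lam\sim\log R=\log(1/(2r))$, i.e.\ $n\sim\log(1/r)/\lam$. By the definition \eqref{Lyap1} of the pointwise upper exponent, for every $\eps>0$ there is $j_0$ with $\|\Cc_{_\Om}((\ba,\om\vec s),j)\|\le e^{j(\chi^+_{\ba,\om\vec s}+\eps)}$ for $j\ge j_0$. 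The geometric series in the previous display is then dominated by its top term when $\chi^+_{\ba,\om\vec s}>0$ and by a constant when $\chi^+_{\ba,\om\vec s}\le0$, so $\int_{X_\ba}|S_R|^2\,d\mu_\ba\lesssim e^{2n(\max\{\chi^+_{\ba,\om\vec s},0\}+\eps)}$. Inserting this and $r\asymp e^{-n\lam}$ into the Hof inequality of the first step would give
\[
\sig_F\bigl(B(\om,r)\bigr)\ \lesssim\ r^{\,2\bigl(1-\max\{\chi^+_{\ba,\om\vec s},0\}/\lam\bigr)}=r^{\,2\min\{1,\,1-\chi^+_{\ba,\om\vec s}/\lam\}};
\]
letting $\eps\to0$ in $\und{d}(\sig_F,\om)=\liminf_{r\to0+}\log\sig_F(B(\om,r))/\log r$ yields \eqref{dim-est}. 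The truncation at exponent $2$ is exactly the $O(1)$ floor coming from the low-level boundary terms, which is the source of the $\min\{1,\cdot\}$.

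The hard part will be the pointwise renormalization identity of the second step: one must check that the twisted integral over complete and partial towers matches the cocycle matrices $\Cc_{_\Om}((\ba,\om\vec s),j)$ with the correct phases, and that the recursive boundary terms are genuinely bounded by a constant multiple of $\|\Cc_{_\Om}((\ba,\om\vec s),j)\|$ at the corresponding level $j$ (from the original base point, rather than a shifted one), so that the single pointwise exponent $\chi^+_{\ba,\om\vec s}$ controls the whole sum. Recognizability makes the towers unambiguous, unique ergodicity makes the space average $\int_{X_\ba}|S_R|^2\,d\mu_\ba$ meaningful and uniform in $x$, and the uniform-in-$\om$ boundedness of $\vec v(\om)$ (where the Lipschitz regularity of the $\psi_a$ enters) keeps the level-$0$ data from interfering with the cocycle growth. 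Carrying this book-keeping through with explicit control of the two end towers is the technical heart; the harmonic analysis of the first step and the Lyapunov accounting of the third are comparatively routine.
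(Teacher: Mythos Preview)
The paper does not give its own proof of this theorem; it is quoted verbatim as \cite[Theorems 4.3 and 4.6]{BuSo20}, so there is nothing in the present paper to compare against. Your sketch is, in outline, the argument of \cite{BuSo20}: Hof's inequality to pass from $\sig_F(B(\om,r))$ to $L^2$-growth of twisted Birkhoff integrals, then the hierarchical tower decomposition (prefix/suffix at each level) to bound those integrals by $\sum_{j\le n}\|\Cc_{_\Om}((\ba,\om\vec s),j)\|$, and finally the matching $n\asymp \lam^{-1}\log(1/r)$ coming from \eqref{RV2} together with the positivity in {\bf (C3)}.

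One technical point worth tightening: the identity you write for $\int_{X_\ba}|S_R|^2\,d\mu_\ba$ is not literally correct, because the Fej\'er-kernel formula holds for the $L^2(\wt\mu_\ba)$-norm of the twisted integral over the \emph{suspension} space, not for the base integral you wrote. In \cite{BuSo20} one works with $y\in\Xx_\ba^{\vec s}$ and the norm in $L^2(\wt\mu_\ba)$; the reduction to base points then costs only an $O(1)$ boundary correction (bounded by $\max_a s_a\cdot\|F\|_\infty$), which is harmless for the exponent but should be made explicit. Also, the role of the Lipschitz assumption is not the uniform boundedness of $\vec v(\om)$ (any bounded $\psi_a$ gives that); it enters in controlling the lowest-level boundary pieces uniformly in $\om$ and in the density statement for Lip-cylindrical functions. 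With those two clarifications your plan matches the cited proof.
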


\begin{corollary}[see {\cite[Corollary 4.5]{BuSo20}}] \label{cor-BuSo}
Under the assumptions of the last theorem, we have for any $\vec s\in \R^d_+$, for $\P$-a.e.\ $\ba\in \Om$, and for any nonzero Lip-cylindrical function $F$:
\be \label{Lyapu-est}
\chi^+_{\ba,\om\vec s} \ge \textstyle{\half}\lam\ \ \mbox{for}\ \sig_F\mbox{-a.e.}\ \om\in \R.
\ee
\end{corollary}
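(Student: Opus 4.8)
The plan is to deduce the corollary from Theorem~\ref{th-BuSo} by playing the lower bound on the local dimension against the elementary upper bound valid for any finite Borel measure on the line. Fix a $\P$-generic $\ba\in\Om$ for which the conclusion of Theorem~\ref{th-BuSo} holds, together with $\vec s\in\R^d_+$ and a nonzero Lip-cylindrical function $F$; then the theorem gives
\be \label{cor-pf-1}
\und{d}(\sig_F,\om) \ge 2\min\Bigl\{1,\ 1-\tfrac{\chi^+_{\ba,\om\vec s}}{\lam}\Bigr\}\qquad\text{for every }\om\in\R.
\ee
The idea is to show that the left-hand side is at most $1$ for $\sig_F$-a.e.\ $\om$, and then to solve the resulting inequality for $\chi^+_{\ba,\om\vec s}$.

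The key input is the classical fact that for every finite positive Borel measure $\sig$ on $\R$ one has $\und{d}(\sig,\om)\le 1$ for $\sig$-a.e.\ $\om$; I would either quote it from standard geometric measure theory or prove it by a covering estimate. Arguing by contradiction, suppose $E=\{\om:\und{d}(\sig,\om)>1\}$ had $\sig(E)>0$. Slicing $E$ by a rational gap $\eps>0$ and a radius threshold $1/k$, and intersecting with a window $[-k,k]$, one reduces to a set $E_k\subset[-k,k]$ with $\sig(E_k)>0$ on which $\sig(B(\om,\rho))\le \rho^{1+\eps/2}$ for all $0<\rho<1/k$. For $r<1/(5k)$ the Vitali $5r$-covering lemma yields disjoint balls $B(\om_i,r)$, $\om_i\in E_k$, whose dilates $B(\om_i,5r)$ cover $E_k$; as these disjoint balls sit inside a window of length $2k+2$, there are at most $(k+1)/r$ of them, whence
\be \label{cor-pf-2}
\sig(E_k)\le \sum_i \sig\bigl(B(\om_i,5r)\bigr) \le \frac{k+1}{r}\,(5r)^{1+\eps/2} = (k+1)\,5^{1+\eps/2}\,r^{\eps/2},
\ee
which tends to $0$ as $r\to0$, a contradiction. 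Hence $\sig(E)=0$. This measure-theoretic step is the only substantive point; the rest is bookkeeping.

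Finally I would combine the two bounds. For $\sig_F$-a.e.\ $\om$ the claim gives $\und{d}(\sig_F,\om)\le 1$, so \eqref{cor-pf-1} yields $\min\{1,\,1-\chi^+_{\ba,\om\vec s}/\lam\}\le\half$. Since the constant $1$ exceeds $\half$, the minimum can be $\le\half$ only if $1-\chi^+_{\ba,\om\vec s}/\lam\le\half$; using that $\lam>0$ (as noted after \eqref{RV2}), this rearranges to $\chi^+_{\ba,\om\vec s}\ge\half\lam$ for $\sig_F$-a.e.\ $\om$, which is exactly \eqref{Lyapu-est}. I do not expect any genuine obstacle: the only place requiring care is making the covering argument of the second paragraph precise (the $\limsup$-versus-$\liminf$ bookkeeping and the choice of thresholds when slicing $E$), but this is routine.
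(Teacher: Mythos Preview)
Your proof is correct and follows essentially the same strategy as the paper: combine the lower bound \eqref{dim-est} with the fact that $\und{d}(\sig_F,\om)\le 1$ for $\sig_F$-a.e.\ $\om$, then solve the resulting inequality. The only difference is that the paper obtains this fact by quoting the identity $\Dh^*(\sig_F)=\inf\{s:\und{d}(\sig_F,\om)\le s\text{ for }\sig_F\text{-a.e.\ }\om\}$ from Falconer together with the trivial bound $\Dh^*(\sig_F)\le 1$, whereas you give a self-contained Vitali covering argument for the same conclusion.
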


We recall the derivation of the corollary. The upper Hausdorff dimension of the measure $\sig_F$ is defined by 
$$
\Dh^*(\sig_F) = \inf \{\Dh(E):\ E\subset \R \ \mbox{is a Borel set with}\ \sig_F(\R\setminus E) = 0\}. 
$$
Obviously, $\Dh^*(\sig_F)\le 1$. Combining \eqref{dim-est} with the well-known formula
$$
\Dh^*(\sig_F) =
\inf\{s:\ \und{d}(\sig_F,\om) \le s\ \ \mbox{for $\sig_F$-a.e.}\ \ \om\},
$$
see, e.g., \cite[(10.12)]{Falconer3}, yields the claim. \qed

\subsection{Equidistribution results}

The next theorem is a special case of a result of Benoist and Quint \cite{BQ}. 

\begin{theorem}[Benoist and Quint] \label{th-equi1}
Let $\Gam$ be a semigroup in $SL(d,\R)$ which

\smallskip

{\bf (A)} acts strongly irreducibly on $\R^d$, that is, no finite union of proper vector subspaces of $\R^d$ is $\Gam$-invariant;

\smallskip

{\bf (B)} contains a proximal element, i.e., a matrix with a dominant eigenvalue.

\smallskip

\noindent Let $\nu$ be a finitely supported probability measure on $\Gam$, whose support generates $\Gam$.
Consider the random walk $g_n \ldots g_1$, where $g_i$ are chosen i.i.d., according to the measure $\nu$ on $SL(d,\R)$. Then for every
$x_0\in \T^d\setminus \Q^d/\Z^d$, the sequence
$$
(g_n \ldots g_1\cdot x_0)_{n\in \N}
$$
is almost surely (i.e., for a.e.\ realization of the random walk) equidistributed on the torus $\T^d$. 
\end{theorem}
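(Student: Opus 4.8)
The plan is to reduce the statement to a Fourier-analytic (Weyl) criterion and then to control the resulting character sums along the random trajectory. Since the random walk must act on $\T^d$, the generators lie in $SL(d,\Z)$ (as in the application to unimodular substitutions), so each $g_i$ preserves the lattice $\Z^d$. Writing $b_n := g_n\cdots g_1$, equidistribution of $(b_n\cdot x_0)_n$ on $\T^d$ is equivalent, by Weyl's criterion, to the assertion that for every nonzero frequency $\xi\in\Z^d$ one has
$$
\frac1N\sum_{n=1}^N e^{2\pi i\langle \xi,\, b_n x_0\rangle}\longrightarrow 0\qquad\text{almost surely;}
$$
as $\Z^d$ is countable, it suffices to secure this for each $\xi$ on a full-measure event and intersect. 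Using $g_i^{\sf T}\xi\in\Z^d$, I would rewrite $\langle\xi, b_n x_0\rangle = \langle b_n^{\sf T}\xi,\, x_0\rangle$, where $b_n^{\sf T}\xi = g_1^{\sf T}\cdots g_n^{\sf T}\xi$ is governed by the ``backward'' (adjoint) random walk acting on the space of integer frequencies. Thus everything is recast as understanding the orbit of $\xi$ under the transposed cocycle and its pairing with the fixed point $x_0$.

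First I would invoke the classical theory of products of random matrices. Assumptions (A) and (B) pass to the transpose semigroup $\Gam^{\sf T}$, which is again strongly irreducible and proximal; hence by Furstenberg's theorem the normalized vectors $\eta_n:=b_n^{\sf T}\xi/\|b_n^{\sf T}\xi\|$ equidistribute in $\mathbb P(\R^d)$ toward the unique $\nu^{\sf T}$-stationary measure, and the norms $\|b_n^{\sf T}\xi\|$ grow exponentially fast, the top Lyapunov exponent being strictly positive under proximality and strong irreducibility. Heuristically, because the integer frequency vectors escape to infinity in generic (and eventually irrational) directions, the phases $\langle b_n^{\sf T}\xi, x_0\rangle$ should decorrelate and the Cesàro averages above should vanish, provided $x_0$ carries no rational structure. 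The Furstenberg input alone, however, is not enough to make the averaging of the phases rigorous.

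The main obstacle, and the genuinely deep input, is to rule out persistent resonance between the escaping frequencies and $x_0$ when $x_0\notin\Q^d/\Z^d$. The clean way to organize this is through the classification of stationary measures: any weak-$*$ limit point of the empirical measures $\frac1N\sum_{n=1}^N\delta_{b_n x_0}$ is almost surely $\nu$-stationary on $\T^d$ (the Cesàro limits of the Markov operator $Pf=\int f\circ g\,d\nu(g)$ applied to $\delta_{x_0}$ are stationary), and under (A)--(B) the Benoist--Quint classification forces every ergodic $\nu$-stationary measure on $\T^d$ to be either the Haar measure or an atomic measure carried by a finite $\Gam$-invariant set of rational points. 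It then remains to exclude any atomic component in the limit, i.e.\ to show that an orbit started at the irrational point $x_0$ cannot accumulate positive mass on the singular (rational) stationary measures. This ``no escape of mass to rational subtori'' step is exactly where the Benoist--Quint exponential drift method (or, alternatively, the quantitative Fourier-decay estimates of Bourgain--Furman--Lindenstrauss--Mozes) is indispensable, and I would cite it as the core black box rather than reprove it. Granting it, the limiting empirical measure is forced to be Haar for every $x_0\notin\Q^d/\Z^d$, which is the desired equidistribution.
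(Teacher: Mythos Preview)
The paper does not prove this theorem at all: it is quoted as a special case of a result of Benoist and Quint and used as an external input (together with Proposition~\ref{prop-equi1}) in the proof of Theorem~\ref{th:main}. So there is no ``paper's own proof'' to match; the appropriate treatment here is a citation, not an argument.

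As for your sketch, it has the right architecture but is ultimately circular. You correctly reduce to showing that, for $x_0\notin\Q^d/\Z^d$, almost every empirical measure $\frac1N\sum_{n\le N}\delta_{b_n x_0}$ converges to Haar, and you correctly note that weak-$*$ limit points are $\nu$-stationary (though your parenthetical justification conflates the path-wise empirical measure with the averaged measure $\frac1N\sum P^n\delta_{x_0}$; the correct reason is a martingale/Breiman-type argument, itself part of the Benoist--Quint toolkit). The classification step---stationary measures on $\T^d$ are Haar or finitely supported on rationals---is already one of the deep Benoist--Quint theorems, and the ``no escape of mass to rational points'' step you flag as a black box is precisely the remaining content of the equidistribution theorem you are trying to prove. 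Invoking the exponential drift method or BFLM at that point is not filling a gap, it is restating the theorem. If the intent was merely to outline where the depth lies, that is fine and accurate; but it should be presented as an explanatory remark on a cited result, not as a proof.
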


We also need the following proposition, which we state in a special case.

\begin{prop}[{\cite[Prop.\,5.1]{SiWe}}] \label{prop-equi1}
Let $X$ be a compact space, $G$ a locally compact 2nd countable group acting continuously on $X$. Suppose that $m$ is a $G$-invariant and ergodic measure  on $X$, and $\mu$ a probability measure with finite support $E$ on $G$. Let $B = E^\N$, $\beta = \mu^{\N}$, and denote by $T$ the left shift on $B$. Fix $x_0\in X$ and suppose that for $\beta$-a.e.\ $b\in B$, the random path
$(g_{b_n}\ldots g_{b_1}\cdot x_0)_{n\in \N}$ is equidistributed with respect to the measure $m$ on $X$. Then for $\beta$-a.e.\ $b\in B$, the sequence
$$
(g_{b_n}\ldots g_{b_1}\cdot x_0, T^n b)_{n\in \N}
$$
is equidistributed with respect to the measure $m\times \beta$ on $X\times B$.
\end{prop}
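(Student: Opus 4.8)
The plan is to verify equidistribution by testing against a suitable countable, dense family of functions, and to exploit the fact that under $\beta=\mu^{\N}$ the position of the random walk after $n$ steps and the shifted point $T^n b$ depend on disjoint blocks of coordinates of $b$. Throughout I abbreviate $y_n:=g_{b_n}\cdots g_{b_1}x_0$. First I would reduce to product test functions. The empirical measures $\frac{1}{N}\sum_{n=1}^N \delta_{(y_n,\,T^n b)}$ are probability measures on the compact space $X\times B$ (which we may take metrizable, as in the application $X=\T^d$), so their weak-$*$ convergence to $m\times\beta$ can be tested on any family of continuous functions with dense linear span. By Stone--Weierstrass it suffices to treat $F=\phi\otimes\psi$ with $\phi\in C(X)$ and $\psi$ a \emph{cylinder function} depending only on the first $k$ coordinates of $B$; taking $\phi$ from a countable dense subset of $C(X)$ and $\psi$ from the countable set of such cylinder functions with rational values, it is enough to prove, for each fixed such pair,
\be
\frac{1}{N}\sum_{n=1}^N \phi(y_n)\,\psi(T^n b)\ \longrightarrow\ \Bigl(\int_X\phi\,dm\Bigr)\Bigl(\int_B\psi\,d\beta\Bigr)\quad\text{for }\beta\text{-a.e.\ }b,
\ee
and then intersect the resulting full-measure sets over the countable family.

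Next I would split $\psi(T^n b)=\bar\psi+\bigl(\psi(T^n b)-\bar\psi\bigr)$, where $\bar\psi:=\int_B\psi\,d\beta$. The ``mean'' term contributes $\bar\psi\cdot\frac{1}{N}\sum_{n=1}^N\phi(y_n)$, which converges to $\bar\psi\int_X\phi\,dm$ by the standing hypothesis that the path $(y_n)$ equidistributes with respect to $m$ for $\beta$-a.e.\ $b$. It then remains to show that the ``fluctuation'' average $\frac{1}{N}\sum_{n=1}^N Z_n$ tends to $0$, where $Z_n:=\phi(y_n)\bigl(\psi(T^n b)-\bar\psi\bigr)$. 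The key structural observation is the decoupling: $y_n$ is measurable with respect to $\mathcal{F}_n:=\sigma(b_1,\dots,b_n)$, whereas $\psi(T^n b)=\psi(b_{n+1},\dots,b_{n+k})$ depends only on coordinates that are independent of $\mathcal{F}_n$. Since the $b_i$ are i.i.d.\ under $\beta$, this yields $\E[\psi(T^n b)\mid\mathcal{F}_n]=\bar\psi$ and hence $\E[Z_n\mid\mathcal{F}_n]=0$.

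Finally I would convert this into a martingale strong law of large numbers. The hard part will be the \emph{delay}: $Z_n$ is not $\mathcal{F}_n$-measurable but only $\mathcal{F}_{n+k}$-measurable, so $(Z_n)_n$ is not itself a martingale-difference sequence. I would resolve this by splitting the sum into the $k$ arithmetic progressions $n\equiv r\ (\mathrm{mod}\ k)$, $r=1,\dots,k$. Along each progression, setting $\mathcal{G}_j:=\mathcal{F}_{r+jk}$, the variable $Z_{r+jk}$ is $\mathcal{G}_{j+1}$-measurable and satisfies $\E[Z_{r+jk}\mid\mathcal{G}_j]=0$, so $(Z_{r+jk})_j$ is a uniformly bounded martingale-difference sequence, with $|Z_n|\le 2\,\|\phi\|_\infty\|\psi\|_\infty$. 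The martingale strong law then gives $\frac{1}{M}\sum_{j<M}Z_{r+jk}\to 0$ almost surely for each $r$, and summing over the $k$ progressions yields $\frac{1}{N}\sum_{n\le N}Z_n\to 0$ almost surely. Combining the mean and fluctuation terms establishes the displayed convergence for the fixed pair $(\phi,\psi)$, and the density--countability reduction of the first step upgrades this to weak-$*$ convergence of the empirical measures to $m\times\beta$ for $\beta$-a.e.\ $b$, which is the asserted equidistribution.
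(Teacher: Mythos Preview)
The paper does not supply its own proof of this proposition; it merely quotes the statement from \cite[Prop.\,5.1]{SiWe} and uses it as a black box. Your argument is correct and self-contained: the reduction to product test functions $\phi\otimes\psi$ with $\psi$ a cylinder function, the decoupling observation that $y_n$ is $\mathcal{F}_n$-measurable while $\psi(T^n b)$ is independent of $\mathcal{F}_n$, and the splitting into $k$ arithmetic progressions to obtain bounded martingale-difference sequences all go through cleanly. The only caveat is that your countable-dense-family reduction implicitly uses separability of $C(X\times B)$, i.e.\ metrizability of $X$; you flag this yourself, and it is certainly satisfied in the application ($X=\T^d$). Since the paper provides no proof to compare against, there is nothing further to contrast.
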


Combining Theorem~\ref{th-equi1} with Proposition~\ref{prop-equi1}, we obtain that if $\nu$ is a probability measure on $\Gam$ with a finite support $E$, which acts on the Euclidean space $\R^d$ totally irreducibly and contains a proximal element, then given any $x_0\in \T^d\setminus \Q^d/\Z^d$, for 
almost every $b\in B$, the sequence $(g_{b_n}\ldots g_{b_1}\cdot x_0, T^n b)_{n\in \N}$ is equidistributed with respect to $m_d\times \beta$.


\section{A new result: application of equidistribution} 

\begin{theorem}\label{th:main}
For each $j\in \{1,\ldots,\ell\}$, let $\zeta_j$ be a
substitution on $d$ letters. Consider the infinite product space
 $\Om=\{\zeta_1,\ldots,\zeta_\ell\}^\N$, with $\ell\ge 2$, equipped with a fully supported
Bernoulli measure $\P = \mu^{\N}$.
Let $\Sf_j$ be the substitution matrix of $\zeta_j$, and let $\Gam$ be the semigroup generated by $\Sf_j^\Tf$, $j\le \ell$.
We suppose that the following properties hold:
\begin{enumerate}
\item[{\bf (B1)}] for each $j\le \ell$, $\det(\Sf_j)=1$, and for $\P$-a.e.\ $\ba\in \Om$ the space $X_\ba$ is aperiodic (e.g., all $\zeta_j$ are left or
right proper);
\item[{\bf (B2)}] the semigroup $\Gam<SL(d,\R)$ acts strongly irreducibly on $\R^d$;
\item[{\bf (B3)}] the semigroup $\Gam$  contains a matrix with strictly positive entries.
\end{enumerate}
Let $\lam$ be the top Lyapunov exponent of the cocycle induced by $\Sf^\Tf$, as in \eqref{RV2}. We have $\lam>0$.
Consider the skew product transformation, as in \eqref{skew1}: $\Ga:\, \Om\times \T^d \to \Om\times \T^d$, defined by
$$
\Ga(\ba, \bt) = \bigl(\sig \ba, \Sf^\Tf_{\zeta_1} \bt \, ({\rm mod}\ \Z^d)\bigr),\ \ \mbox{where}\ \ \ba = (\zeta_n)_{n\ge 1}\ \ \mbox{and}\ \ \bt \in \T^d = \R^d/\Z^d.
$$
Then

{\bf (i)} The properties {\bf (C1)--(C4)} are satisfied, and for $\P$-a.e.\ $\ba\in \Om$ the corresponding $S$-adic measure-preserving
system $(X_\ba,T,\mu_\ba)$ is aperiodic, recognizable, and uniquely ergodic.

{\bf (ii)}
The skew product $\Ga$ is ergodic, hence the top Lyapunov exponent $\chi(\Cc_{_\Om})$ exists, see \eqref{Lyap12}.

{\bf (iii)} Let $\nu$ be any measure on $\T\cong \R/\Z$, such that the set of rationals has zero measure (in particular, any continuous measure). 
If $\chi(\Mc_{_\Om})<\half \lam$, then for $\P$-a.e.\ $\ba$ the maximal spectral type of
$(X_\ba,T,\mu_\ba)$ is singular with respect to $\nu$. In particular, a.e.\ such system is spectrally disjoint from any given weakly mixing measure-preserving transformation.
\end{theorem}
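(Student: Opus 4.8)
The plan is to establish the three parts in order, with the bulk of the new work residing in parts (ii) and (iii). For part (i), I would verify {\bf (C1)--(C4)} directly from the hypotheses: {\bf (C1)} is immediate since a Bernoulli measure is ergodic (indeed mixing); {\bf (C2)} holds because the alphabet $\FrR$ is finite, so $\log(1+\|\Sf_{\zeta_1}\|)$ is bounded; {\bf (C3)} follows from {\bf (B3)}, which supplies a positive matrix $\Sf_\bq$ realized by a word $\bq$ of positive $\P$-measure (since $\P$ is fully supported Bernoulli); and {\bf (C4)} follows from $\det(\Sf_j)=1$ in {\bf (B1)} together with finiteness of $\FrR$. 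Aperiodicity is assumed in {\bf (B1)}, so Theorem~\ref{th-recog0} yields recognizability, and unique ergodicity follows from {\bf (A1)} via the references cited after {\bf (A1)}. That $\lam>0$ is asserted after \eqref{RV2} and follows from {\bf (C3)}.

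For part (ii), the key point is to prove ergodicity of the skew product $\Ga$ on $\Om\times\T^d$. My plan is to invoke the equidistribution machinery assembled in the preliminaries. Since $\Gam<SL(d,\R)$ acts strongly irreducibly by {\bf (B2)} and contains a proximal element by {\bf (B3)} (a strictly positive matrix is primitive, hence by Perron--Frobenius proximal), Theorem~\ref{th-equi1} applies: for every $x_0\in\T^d\setminus\Q^d/\Z^d$ the random walk orbit $(g_n\cdots g_1\cdot x_0)$ is a.s.\ equidistributed for $m_d$. Feeding this into Proposition~\ref{prop-equi1} with $X=\T^d$, $G=\Gam$, $m=m_d$, and $\mu=\nu$ the law of the i.i.d.\ matrices, the combined statement quoted just before Section~3 gives that $(g_{b_n}\cdots g_{b_1}\cdot x_0,\,\sig^n b)$ is equidistributed for $m_d\times\P$ on $\T^d\times\Om$. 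Unwinding the skew-product structure of $\Ga$ in \eqref{skew1}, this equidistribution is precisely a Birkhoff-type statement for $\Ga$ applied to continuous functions on a generating (indeed, spanning) class; since the set $\Q^d/\Z^d$ has $m_d$-measure zero, I obtain unique ergodicity of $\Ga$ relative to $\P\times m_d$, which in particular yields ergodicity. Given ergodicity, the Lyapunov exponent $\chi(\Cc_{_\Om})$ exists by \eqref{Lyap12}.

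For part (iii), I would argue by contradiction against Corollary~\ref{cor-BuSo}. Suppose $\chi(\Cc_{_\Om})<\half\lam$. By ergodicity of $\Ga$ (from part (ii)) and the Furstenberg--Kesten/Kingman convergence in \eqref{Lyap12}, the fiberwise upper exponent $\chi^+_{\ba,\bt}$ equals $\chi(\Cc_{_\Om})$ for $(\P\times m_d)$-a.e.\ $(\ba,\bt)$. Now fix the roof vector $\vec s=\one$, so that the suspension flow corresponds via \eqref{eq-spec} to the $\Z$-action, and consider the slice $\bt=\om\one$ parametrized by $\om\in\R$. The equidistribution from part (ii), applied with $x_0=\one\,(\mathrm{mod}\ \Z^d)$ whenever $\om\one\notin\Q^d/\Z^d$, transfers the a.e.\ statement $\chi^+_{\ba,\bt}=\chi(\Cc_{_\Om})<\half\lam$ from $m_d$-a.e.\ $\bt$ to $\nu$-a.e.\ $\om$: the hypothesis that $\nu$ assigns zero mass to the rationals guarantees that $\om\one\notin\Q^d/\Z^d$ for $\nu$-a.e.\ $\om$, and the equidistribution forces the set of ``good'' $\om$ to have full $\nu$-measure. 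But Corollary~\ref{cor-BuSo} asserts $\chi^+_{\ba,\om\one}\ge\half\lam$ for $\sig_F$-a.e.\ $\om$, for every nonzero Lip-cylindrical $F$. I would combine these: the two statements are compatible only if $\sig_F$ is singular with respect to $\nu$, for every such $F$. Since Lip-cylindrical functions (equivalently, via \eqref{eq-spec}, the cylindrical functions generating the maximal spectral type of $(X_\ba,T,\mu_\ba)$) span a dense set, the maximal spectral type $[\sig_{\max}]$ in \eqref{sigmax} is a countable convex combination of measures each singular to $\nu$, hence singular to $\nu$. Taking $\nu$ to be the maximal spectral type of any fixed weakly mixing transformation (which is continuous, so assigns zero mass to rationals) gives spectral disjointness, establishing the final clause.

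The main obstacle I anticipate is the transfer in part (iii) from an ``$m_d$-a.e.\ $\bt$'' statement to a ``$\nu$-a.e.\ $\om$'' statement along the one-parameter family $\bt=\om\one$. The exponent equality $\chi^+_{\ba,\bt}=\chi(\Cc_{_\Om})$ holds only Lebesgue-a.e.\ on the torus, and the line $\{\om\one:\om\in\R\}$ is itself $m_d$-null, so a naive Fubini slicing fails; this is exactly where the Benoist--Quint equidistribution is indispensable, because it upgrades the a.e.\ fiberwise statement to one valid along orbits meeting the relevant slice, letting me conclude for $\nu$-a.e.\ $\om$ without requiring $\nu\ll m_d$. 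Getting this comparison airtight — carefully matching the quantity $\chi^+_{\ba,\om\one}$ against the equidistributed Birkhoff averages of $\log\|\Cc\|$ and controlling the $\limsup$ versus the genuine limit — is the delicate step; the rest reduces to the quoted results and the standard density/singularity bookkeeping.
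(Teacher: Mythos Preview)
Your plan is correct and follows the paper's approach essentially step for step, including the reduction to the suspension flow via \eqref{eq-spec}, the appeal to Corollary~\ref{cor-BuSo}, and the final Fubini swap. For the step you rightly flag as delicate, the paper (its Lemma~3.3) does not try to ``transfer'' the $m_d$-a.e.\ equality to the slice $\{\om\one\}$; instead it proves directly that $\chi^+_{\ba,\om\one}\le\chi(\Cc_{_\Om})$ for each fixed irrational $\om$ and $\P$-a.e.\ $\ba$, by bounding the $\limsup$ above by $\frac{1}{nk}\sum_{j=0}^{n-1}\log\|\Cc^{[k]}(\Ga^{kj}(\ba,\om\one))\|$ for each $k$, applying equidistribution of $\Ga^k$-orbits (valid for every $k$ because {\bf (B2)--(B3)} pass to $\mu^{*k}$) to the \emph{continuous} integrand $\log(\eps+\|\Cc^{[k]}\|)$, letting $\eps\to 0$ via dominated convergence (integrability of $\log\|\Cc^{[k]}\|$ from Lemma~\ref{lem:lower}), and finally taking $\inf_k$ to recover $\chi(\Cc_{_\Om})$ from \eqref{Lyap12}. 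One small slip: the starting point for Benoist--Quint is $x_0=\om\one\pmod{\Z^d}$, not $\one$.
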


Here (iii) is the main claim of the theorem; (i) and (ii) are essentially a summary of what we have already shown.

\begin{proof} (i) Properties {\bf (C1)-(C2)} are immediate from the assumptions, {\bf (C3)} follows from {\bf (B3)}, since $\P$ is a Bernoulli 
measure, and {\bf (C4)} follows from {\bf (B1)}. The remaining part of the claim was already noted in Section~\ref{sec:rand}.

\smallskip

(ii)
 Applying Theorem~\ref{th-equi1} and Proposition~\ref{prop-equi1}, we obtain that, given any $\bt \in \T^d\setminus \Q^d/\Z^d$, for $\P$-a.e.\ $\ba$
the orbit $\Ga^n(\ba,\bt)$ is equidistributed with respect to the measure $\P\times m_d$. 
Note also that conditions {\bf (A), (B)}
are preserved when passing from $\mu$ to $\mu^{*k}$. Thus we have

\begin{lemma} \label{lem:equi2} Under the assumptions of the theorem,
given any $\bt \in \T^d\setminus \Q^d/\Z^d$, for $\P$-a.e.\ $\ba$, for all $k\in \N$, the orbit
$$
\{\Ga^{kn}(\ba,\bt)\}_{n\in \N}
$$
is equidistributed with respect to the measure $\P\times m_d$.
\end{lemma}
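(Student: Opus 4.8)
The plan is to fix $\bt\in \T^d\setminus \Q^d/\Z^d$ once and for all and to establish equidistribution of $\{\Ga^{kn}(\ba,\bt)\}_{n\in\N}$ for each fixed $k$ on its own full-measure set $\Om_k\subset\Om$; the ``for all $k$'' statement then follows by passing to the countable intersection $\bigcap_{k\ge1}\Om_k$. Note that this is not a formality: a subsequence of an equidistributed sequence need not equidistribute, so the already-established equidistribution of the full orbit $\{\Ga^{n}(\ba,\bt)\}_n$ does not by itself give the claim along the arithmetic progression $kn$. The point is rather that the progression is \emph{itself} the orbit of a random walk, this time driven by the $k$-fold convolution, so the Benoist--Quint machinery applies to it directly.

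First I would unfold the iterate. Writing $\ba=(\zeta_n)_{n\ge1}$, the skew-product definition \eqref{skew1} gives
$$
\Ga^{kn}(\ba,\bt)=\bigl(\sig^{kn}\ba,\ (\Sf^\Tf_{\zeta_{kn}}\cdots\Sf^\Tf_{\zeta_1})\,\bt\bigr),
$$
and grouping the matrices into consecutive blocks of length $k$, $h_j:=\Sf^\Tf_{\zeta_{jk}}\cdots\Sf^\Tf_{\zeta_{(j-1)k+1}}$, the torus coordinate becomes $h_n\cdots h_1\cdot\bt$. Because $\P=\mu^{\N}$ is Bernoulli, the blocks $h_1,h_2,\ldots$ use disjoint families of i.i.d.\ letters and are therefore themselves i.i.d., with common law $\nu^{*k}$, where $\nu$ is the push-forward of $\mu$ under $\zeta\mapsto\Sf^\Tf_\zeta$; this is a finitely supported measure on the semigroup generated by products of $k$ of the matrices $\Sf_j^\Tf$. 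Equivalently, after re-encoding $\Om$ by grouping letters into blocks of length $k$, the transformation $\Ga^{k}$ is a skew product of the same form \eqref{skew1}, whose shift is $\sig^k$, whose driving measure is $\nu^{*k}$, and whose fibre measure $\P$ is unchanged under the re-encoding. Applying the combined equidistribution statement derived from Theorem~\ref{th-equi1} and Proposition~\ref{prop-equi1} to this measure $\nu^{*k}$, with $x_0=\bt\notin\Q^d/\Z^d$, yields a full-measure set $\Om_k$ on which $\bigl(h_n\cdots h_1\cdot\bt,\ \sig^{kn}\ba\bigr)_n$ equidistributes for $m_d\times\P$; exchanging the two coordinates gives exactly the equidistribution of $\{\Ga^{kn}(\ba,\bt)\}_n$ for $\P\times m_d$. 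Setting $\Om_\bt:=\bigcap_{k\ge1}\Om_k$, a full-measure set, concludes the argument.

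The hard part, and the only genuinely delicate point, is the input that conditions {\bf (A)} and {\bf (B)} of Theorem~\ref{th-equi1} survive the passage from $\nu$ to $\nu^{*k}$—precisely the remark recorded just before the statement of the lemma. Proximality is the easy half: if $g\in\Gam$ is proximal, write it as a product of generators $\Sf_j^\Tf$; then $g^k$ groups into blocks of length $k$, hence lies in the semigroup generated by $\supp(\nu^{*k})$, and a power of a proximal element is again proximal. Strong irreducibility is the subtler half, since a subsemigroup could a priori preserve a finite union of proper subspaces that $\Gam$ does not; here one argues at the level of Zariski closures, using that the closure of the ``length-$k$'' semigroup still contains the identity component of the Zariski closure of $\Gam$, which is the object controlling strong irreducibility. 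This is a standard fact in the Benoist--Quint framework, and I would either cite it or verify it for the concrete integer matrices $\Sf_j^\Tf$ at hand; everything else in the proof is the routine bookkeeping needed to match the block-re-encoded shift with $(\Om,\sig^k,\P)$ so that Proposition~\ref{prop-equi1} applies verbatim.
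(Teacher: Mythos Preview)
Your proof is correct and follows the same approach as the paper, which simply records that conditions {\bf (A)} and {\bf (B)} of Theorem~\ref{th-equi1} are preserved when passing from $\mu$ to $\mu^{*k}$ and then invokes the combination of Theorem~\ref{th-equi1} and Proposition~\ref{prop-equi1} for each $k$. You have supplied considerably more detail than the paper's one-line justification---the block re-encoding of $\Om$, the i.i.d.\ structure of the $h_j$, and the check that proximality and strong irreducibility pass to the semigroup generated by $\supp(\nu^{*k})$---but the underlying strategy is identical.
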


This equidistribution result implies that the skew product $\Ga$ is ergodic,
hence the Lyapunov exponent 
$\chi(\Cc_\Om)$ of the spectral cocycle is well-defined.

\smallskip

(iii) The main part of the proof is contained in the following

\begin{lemma} \label{lem:estLyap}
Suppose that all the assumptions of the theorem are satisfied. Let $\om\vec s\not \in \Q^d$. Then for $\P$-a.e.\ $\ba\in \Om$,
$$
\chi^+_{\ba,\om\vec s} \le \chi(\Mc_{_\Om}) \le \textstyle{\frac{1}{2}}\lam.
$$
\end{lemma}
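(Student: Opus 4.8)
The plan is as follows. The right-hand inequality $\chi(\Cc_{_\Om})\le\frac12\lam$ is exactly Lemma~\ref{lem:upper}, already proved, so the entire content is the left-hand inequality $\chi^+_{\ba,\om\vec s}\le\chi(\Cc_{_\Om})$, to be established for $\P$-a.e.\ $\ba$. The difficulty is that $\bt=\om\vec s\pmod{\Z^d}$ is a single fixed point of the torus, so the $(\P\times m_d)$-generic value $\chi(\Cc_{_\Om})$ is not available pointwise at $\bt$; the role of the equidistribution Lemma~\ref{lem:equi2} is precisely to let the $\Ga$-orbit of this one irrational point recover the integral defining $\chi(\Cc_{_\Om})$.

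Fix $\eps>0$. By the infimum formula \eqref{Lyap12}, choose $k=k(\eps)\in\N$ with
\[
\int_\Om\int_{\T^d}\Phi_k(\ba,\bt)\,dm_d(\bt)\,d\P(\ba)<\chi(\Cc_{_\Om})+\eps,\qquad \Phi_k(\ba,\bt):=\tfrac1k\log{\|\Cc_{_\Om}((\ba,\bt),k)\|}_{\rm F}.
\]
Using the cocycle property of Definition~\ref{def-cocycle2} I would group the length-$kn$ cocycle into $n$ consecutive blocks of length $k$, namely the matrices $\Cc_{_\Om}(\Ga^{kj}(\ba,\bt),k)$ for $j=0,\dots,n-1$; submultiplicativity of the Frobenius norm then gives
\[
\frac{1}{kn}\log{\|\Cc_{_\Om}((\ba,\bt),kn)\|}_{\rm F}\le\frac1n\sum_{j=0}^{n-1}\Phi_k\bigl(\Ga^{kj}(\ba,\bt)\bigr).
\]
Since $\|\Cc(\ba,\bt)\|_{\rm F}\le C_1$ uniformly (as in the proof of Lemma~\ref{lem:upper}), a terminal block of length $r<k$ only perturbs $\frac1N\log\|\Cc_{_\Om}((\ba,\bt),N)\|$ by $O(1/N)$, so a standard argument reduces the $\limsup$ over all $N$ in \eqref{Lyap1} to the $\limsup$ over $N=kn$, yielding $\chi^+_{\ba,\om\vec s}\le\limsup_n\frac1n\sum_{j=0}^{n-1}\Phi_k(\Ga^{kj}(\ba,\bt))$.

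The right-hand side is a Birkhoff average along the orbit $\{\Ga^{kj}(\ba,\bt)\}_{j\ge0}$. As $\om\vec s\not\in\Q^d$, the torus point $\bt$ is irrational, so Lemma~\ref{lem:equi2} applies: for $\P$-a.e.\ $\ba$, and simultaneously for every $k$, this orbit equidistributes with respect to $\P\times m_d$. The function $\Phi_k$ is bounded above (by $\log C_1$) and upper semicontinuous, since $\bt\mapsto{\|\Cc_{_\Om}((\ba,\bt),k)\|}_{\rm F}$ is continuous and $\log$ is continuous into $[-\infty,\infty)$, the value $-\infty$ occurring only on the zero set of the cocycle. For such a test function equidistribution still controls the $\limsup$ from above: approximating $\Phi_k$ from above by continuous functions and using monotone convergence gives $\limsup_n\frac1n\sum_{j<n}\Phi_k(\Ga^{kj}(\ba,\bt))\le\int\Phi_k\,d(\P\times m_d)<\chi(\Cc_{_\Om})+\eps$. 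Thus $\chi^+_{\ba,\om\vec s}<\chi(\Cc_{_\Om})+\eps$ for $\P$-a.e.\ $\ba$, and intersecting the full-measure sets along a sequence $\eps_m\downarrow0$ finishes the argument.

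The step I expect to need the most care is the treatment of $\Phi_k$: because $\log{\|\cdot\|}_{\rm F}$ is not continuous --- it equals $-\infty$ where the cocycle matrix vanishes --- one cannot apply equidistribution for continuous test functions directly and must go through upper semicontinuity. This is harmless precisely because only an upper bound on $\chi^+_{\ba,\om\vec s}$ is sought, and an upper semicontinuous function bounded above is exactly what lets one bound the $\limsup$ of the ergodic averages by the integral. The remaining ingredients --- the block decomposition, submultiplicativity, and the reduction from $N=kn$ to all $N$ --- are routine.
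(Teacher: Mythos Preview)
Your proposal is correct and follows essentially the same route as the paper: block the length-$kn$ cocycle into $n$ pieces of length $k$, use submultiplicativity to pass to an ergodic average of $\Phi_k$ along the $\Ga^k$-orbit of the irrational point, and invoke Lemma~\ref{lem:equi2} to bound that average by $\int\Phi_k\,d(\P\times m_d)$, then take the infimum over $k$. The only difference is in how the discontinuity of $\log\|\cdot\|$ at zeros of the cocycle is handled: the paper regularizes explicitly via $\log(\eps+\|\Cc^{[k]}\|)$ and passes to the limit $\eps\to 0$ by dominated convergence (splitting according to whether $\|\Cc^{[k]}\|<\tfrac12$), whereas you invoke the abstract fact that an upper semicontinuous function bounded above admits a decreasing sequence of continuous majorants and use monotone convergence. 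These are interchangeable devices---indeed the paper's $\log(\eps+\|\cdot\|)$ is one such majorant---so the arguments are effectively the same.
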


\begin{proof}[Proof of the lemma] 
This is an extension of an argument from \cite{BuSo22} (more precisely, of the first part of the proof of \cite[Theorem 2.4]{BuSo22}).
Denote
$$
\Cc^{[n]}(\ba,\bt) :=\Cc_{_\Om}((\ba,\bt),n)= \Cc(\Ga^{n-1}(\ba,\bt))\cdot \ldots \cdot \Cc(\ba,\bt)
$$
for convenience. Observe that for any $\ba\in \Om, \bt\in \T^d$, and $k\in \N$ we have
\begin{equation} \label{est1}
\chi_{\ba,\bt}^+ = \limsup_{n\to \infty} \frac{1}{nk} \log\|\Cc^{[nk]}(\ba,\bt)\|.
\end{equation}
Indeed, the inequality $\ge$ is obvious, and the reverse inequality follows from 
\begin{eqnarray*}
\|\Cc^{[nk+j]}(\ba,\bt)\| & \le & \|\Cc^{[nk]}(\ba,\bt)\|\cdot \|\Cc^{[j]}(\Ga^{nk}(\ba,\bt))\| \\
& \le & \|\Cc^{[nk]}(\ba,\bt)\|\cdot \max_{i\le \ell}\|\Sf_i\|^{k-1},
\end{eqnarray*}
for $1\le j \le k-1$.

Now, by the definition of the cocycle, in view of (\ref{est1}), for any $\ba\in \Om$ and $k\in \N$,
\begin{equation}\label{est2}
\chi_{\ba,\om\vec s}^+ \le \limsup_{n\to \infty} \frac{1}{nk} \sum_{j=0}^{n-1} \log\|\Cc^{[k]}(\Ga^{kj}(\ba,\om\vec s))\|.
\end{equation}
Fix any $k\in \N$ and $\om\vec s\not\in \Q^d$. 
By \eqref{est2} and Lemma~\ref{lem:equi2}, for $\P$-a.e.\ $\ba$,
\begin{eqnarray*}
\chi^+_{\ba,\om \vec s} & \le & \lim_{\eps\to 0} \limsup_{n\to \infty} \frac{1}{nk} \sum_{j=0}^{n-1} \log\bigl(\eps+ \|\Cc^{[k]}(\Ga^{kj}(\ba,\bt))\|\bigr) \\
& = & \lim_{\eps\to 0} \frac{1}{k} \int_{\Om}\int_{\T^d} \log\bigl(\eps + \|\Cc^{[k]}(\bx,\bt)\|\bigr)\,dm_d(\bt)\,d\P(\bx),
\end{eqnarray*}
using the definition of equidistribution and the fact that the integrand is continuous. 

We claim that  we can pass to the limit $\eps\to 0$ by dominated convergence to obtain for all $k\in \N$:
$$
\chi^+_{\ba,\om \vec s} \le \frac{1}{k} \int_{\Om}\int_{\T^d} \log\|\Cc^{[k]}(\bx,\bt)\|\,d\bt\,d\P(\bx),\ \ \mbox{for $\P$-a.e.\ $\ba$},
$$
hence 
\begin{equation} \label{speca1}
\chi^+_{\ba,\om \vec s} \le \chi(\Cc_{_\Om}) \le \frac{\lam}{2}, \ \ \mbox{for $\P$-a.e.\ $\ba$},
\end{equation}
as desired.
In order to justify passing to the limit $\eps\to 0$, we split the integral into two:
$$
\int_{\Om}\int_{\T^d} \log\bigl(\eps + \|\Cc^{[k]}(\bx,\bt)\|\bigr)\,dm_d(\bt)\,d\P(\bx) =: \int_{A_1} + \int_{A_2},
$$
where 
$$
A_1 = \bigl\{(\bx,\bt) \in \Om\times \T^d:\ \|\Cc^{[k]}(\bx,\bt)\| < 1/2\bigr\},\ \ \mbox{and} \  \ \ A_2 = (\Om\times \T^d)\setminus A_1.
$$
Without loss of generality we can assume that $\eps \in (0,\half)$. Then for $(\bx,\bt)\in A_1$,
$$
\Bigl|\log\bigl| \eps + \|\Cc^{[k]}(\bx,\bt)\|\bigr|\Bigr| = - \log\bigl(\eps + \|\Cc^{[k]}(\bx,\bt)\|\bigr) < - \log \|\Cc^{[k]}(\bx,\bt)\|,
$$
which is positive and integrable, so the Dominated Convergence Theorem applies. On the other hand, for $(\bx,\bt)\in A_2$ we have
$$
\Bigl|\log\bigl| \eps + \|\Cc^{[k]}(\bx,\bt)\|\bigr| \Bigr|\le \max\bigl\{\log 2, \log(1/2 + C_1^k)\bigr\},
$$
where $C_1 = \max_{j\le \ell} \|\Sf_j\|$, so again passing to the limit $\eps\to 0$ is justified.
\end{proof}

Now we can conclude the proof of the theorem.
In view of \eqref{eq-spec},
it suffices to prove the analogous claim for the suspension flow $(\Xx_\ba^{\vec s}, h_\tau,\wt\mu_\ba)$, with the vector of heights $\vec s =\one$, and
assume that $\nu$ is a measure on $\R$. Moreover, by the discussion in Section~\ref{sec:spec}, it is enough 
to show that
for $\P$-a.e.\ $\ba$ we have $\sig_f\perp \nu$ for any Lip-cylindrical function $f$ (of any level).
 By Lemmas~\ref{lem:equi2} and \ref{lem:estLyap}, we have that for any irrational $\om\in \R$, hence for
$\nu$-a.e. $\om$, for $\P$-a.e.\ $\ba$,
\be \label{last1}
\chi^+_{\ba,\om\one} \le \chi(\Mc_{_\Om}) < \textstyle{\frac{1}{2}}\lam.
\ee
Then, by Fubini's Theorem, for $\P$-a.e.\ $\ba$ the inequality holds for $\nu$-a.e. $\om$. Fix such an $\ba\in \Om$ and consider a cylindrical function $f$ for the 
suspension flow $(\Xx_\ba^{\one}, h_\tau,\wt\mu_\ba)$. By Corollary~\ref{cor-BuSo}, $\chi^+_{\ba,\om\one}\ge \half\lam$ for $\sig_f$-a.e.\ $\om$. Hence $\sig_f$ and $\nu$ are mutually singular.
\end{proof}

\begin{corollary} \label{cor:ex}
Let $\zeta_1,\ldots,\zeta_\ell$, $\ell\ge 2$, be a family of primitive substitutions such that their substitution matrices $\Sf_j\in SL(d,\R)$ generate a semigroup which
acts strongly irreducibly on $\R^d$. Consider a random $S$-adic system corresponding to a directive sequence $\ba\in \Om = \{\zeta_1,\ldots,\zeta_\ell\}^\N$, 
equipped with a Bernoulli measure
$\P=(p_1,\ldots,p_\ell)^\N$. Suppose that for $\P$-a.e.\ $\ba\in \Om$ the space $X_\ba$ is aperiodic.
Let $\Aa(\ba) = \Sf^{\sf T}_{\ba_1}$ be the $SL(d,\R)$ cocycle over $(\Om,\sig,\P)$, defined in \eqref{Rauzy-Veech1}, and let $\lam>0$ be its top
Lyapunov exponent. If 
\begin{equation} \label{Lyap3}
\int_{\T^d} \log\|\Cc_{\zeta_j}(\bt)\|\,dm_d(\bt) < \textstyle{\frac{1}{2}}\lam,\ \ \mbox{for all} \ j\le \ell,
\end{equation}
then for $\P$-a.e.\ $\ba\in \Om$ the $S$-adic system $(X_\ba, T, \mu_\ba)$ has singular spectrum. Moreover, for any given weakly mixing measure-preserving system, 
for $\P$-a.e.\ $\ba\in \Om$ 
the $S$-adic system $(X_\ba, T, \mu_\ba)$ is spectrally disjoint from it.
\end{corollary}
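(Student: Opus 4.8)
The plan is to deduce the corollary directly from Theorem~\ref{th:main}, so the first task is to confirm that its standing hypotheses {\bf (B1)}--{\bf (B3)} are met. Condition {\bf (B1)} is immediate: $\det(\Sf_j)=1$ because $\Sf_j\in SL(d,\R)$, and almost sure aperiodicity of $X_\ba$ is assumed. Condition {\bf (B2)}, strong irreducibility of $\Gam$, is assumed outright. For {\bf (B3)} I would exploit the primitivity of each $\zeta_j$: a primitive substitution has a primitive matrix $\Sf_j$, so $\Sf_j^{\,n}$ has strictly positive entries for some $n$, and then $(\Sf_j^{\sf T})^n=(\Sf_j^{\,n})^{\sf T}$ is a strictly positive element of the semigroup $\Gam$ generated by the transposes $\Sf_j^{\sf T}$. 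This furnishes the positive matrix required by {\bf (B3)}.

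The heart of the matter is to extract the quantitative hypothesis $\chi(\Cc_{_\Om})<\half\lam$ of Theorem~\ref{th:main}{\bf (iii)} from the assumption \eqref{Lyap3}. Here I would use the $\inf_k$ expression \eqref{Lyap12} for the top Lyapunov exponent, evaluated at $k=1$. Since $\Cc_{_\Om}((\ba,\bt),1)=\Cc_{\zeta_1}(\bt)$ depends only on the first letter of $\ba$, and $\P$ is the Bernoulli measure with weights $(p_1,\dots,p_\ell)$, this gives
\[
\chi(\Cc_{_\Om})\ \le\ \int_\Om\int_{\T^d}\log\|\Cc_{\zeta_1}(\bt)\|\,dm_d(\bt)\,d\P(\ba)\ =\ \sum_{j=1}^\ell p_j\int_{\T^d}\log\|\Cc_{\zeta_j}(\bt)\|\,dm_d(\bt).
\]
The right-hand side is a convex combination of the $\ell$ integrals appearing in \eqref{Lyap3}, each of which is strictly less than $\half\lam$; as the weights $p_j$ are nonnegative with $\sum_j p_j=1$, the combination is bounded above by $\max_{j}\int_{\T^d}\log\|\Cc_{\zeta_j}(\bt)\|\,dm_d(\bt)<\half\lam$. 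Hence $\chi(\Cc_{_\Om})<\half\lam$.

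With both the structural hypotheses {\bf (B1)}--{\bf (B3)} and the gap $\chi(\Cc_{_\Om})<\half\lam$ verified, I would apply Theorem~\ref{th:main}{\bf (iii)}. Taking $\nu$ to be Lebesgue measure on $\T$ --- continuous and assigning zero mass to the countable set of rationals, hence admissible --- yields that for $\P$-a.e.\ $\ba$ the maximal spectral type of $(X_\ba,T,\mu_\ba)$ is singular with respect to Lebesgue measure, i.e.\ the system has singular spectrum; the spectral disjointness from any prescribed weakly mixing measure-preserving system is exactly the final assertion of Theorem~\ref{th:main}{\bf (iii)} and needs no further work. I do not anticipate a genuine obstacle, since the corollary is in essence a specialization of the theorem; the only point deserving slight care is the strictness of $\chi(\Cc_{_\Om})<\half\lam$, which is precisely why I pass through the $k=1$ term of the infimum \eqref{Lyap12} and compare the resulting convex combination to the maximum of the finitely many integrals in \eqref{Lyap3}, each strictly below $\half\lam$.
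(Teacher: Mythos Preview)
Your proposal is correct and follows essentially the same route as the paper: bound $\chi(\Cc_{_\Om})$ by the $k=1$ term of \eqref{Lyap12}, observe that the resulting integral is the convex combination $\sum_j p_j\int_{\T^d}\log\|\Cc_{\zeta_j}(\bt)\|\,dm_d(\bt)<\half\lam$, and invoke Theorem~\ref{th:main}. Your explicit verification of {\bf (B1)}--{\bf (B3)} is more detailed than the paper's one-line reduction, but the substance is identical.
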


\begin{proof}
By the definition of the measure $\P$, equation \eqref{Lyap12} implies
$$
\chi(\Cc_{_\Om}) \le  \sum_{j=1}^\ell p_j \int_{\T^d} \log\|\Cc_{\zeta_j}(\bt)\|\,dm_d(\bt) < \textstyle{\frac{1}{2}}\lam,
$$
in view of our assumptions,
and the claim follows from Theorem~\ref{th:main}.
\end{proof}

\section{Examples}

It is not so easy to find examples illustrating Theorem~\ref{th:main}, without relying on numerical methods. There are difficulties both in estimating the Lyapunov exponent of the spectral cocycle from above and the Lyapunov exponent $\lam$ of the ``non-twisted'' cocycle, associated with the random $S$-adic system, from below.
Here we provide a class of examples, which demonstrates
that the theorem is non-vacuous.

\begin{example}
Consider the family of substitutions
$$
\zeta_m:\ 0\mapsto 0^{2m}1^{m^2}2,\ 1\mapsto 0,\ 2\mapsto 1,
$$
for $m\in \N$, and let $X_\ba$ be the $S$-adic system associated with the directive sequence $\ba\in \Om=\{\zeta_m,\zeta_{m+1}\}^\N$. Let $\P = (p_1,p_2)^\N$ be any Bernoulli 
measure on $\Om$. We claim that for $m\ge 23$ this system satisfies the conditions of Corollary~\ref{cor:ex}.
\end{example}

\begin{proof} Note that all  the assumptions of Theorem~\ref{th:main} are satisfied. Strong irreducibility follows by an examination of the eigenvectors
of the matrices. Aperiodicity follows from Proposition~\ref{prop-aper}, since the composition of every two substitutions $\zeta_m,\zeta_n$
is left proper.

\smallskip

 {\sc Step 1.}
First we obtain the lower bound for the Lyapunov exponent $\lam$. Since the norm of a matrix is equal to the norm of the transpose, it suffices to estimate from below the norm of
a product
$
M_{i_1}\cdots M_{i_n},
$
where 
$$
M_{i_j}\in \{M_1, M_2\} := \{\Sf_{\zeta_m}, \Sf_{\zeta_{m+1}}\} = \left\{ \left(\begin{array}{ccc} 2m & 1 & 0 \\ m^2 & 0 & 1 \\ 1 & 0 & 0 \end{array}\right),\ \ 
\left(\begin{array}{ccc} 2m+2 & 1 & 0 \\ (m+1)^2 & 0 & 1 \\ 1 & 0 & 0 \end{array}\right) \right\}
$$
Let $\Gam$ be the semigroup generated by $\{M_1,M_2\}\subset SL(3,\R)$. 

\medskip

{\sc Claim 1.} {\em The semigroup $\Gam$ preserves the following cone:
\begin{equation} \label{cone}
\Crs:= \{\bf 0\} \cup \left\{ \left( \begin{array}{c} x_1 \\ x_2 \\ x_3 \end{array} \right):\ \ \begin{array}{l} 2.3m \le x_1/x_3 \le 2.5m+3 \\ m^2 \le x_2/x_3 \le m^2+2m+2\\ x_3>0 \end{array} \right\}
\end{equation}
}
For the verification, see the next section.

\medskip

{\sc Claim 2.} {\em Each of the matrices $M_1,M_2$ expands the 1-norm of any nonzero vector $\bx \in \Crs$ at least by a factor of $1.9m$, hence the Lyapunov exponent satisfies
$$
\lam\ge \log(1.9m).
$$
}
For the verification, see the next section.

\medskip

\noindent {\bf Remark.} It is similarly possible to estimate the Lyapunov exponent from above to obtain that $\lam\le \log(3m)$ for $m\ge 16$. These estimates can certainly be tightened
for larger $m$.

\medskip

{\sc Step 2.} Next we estimate from above the Lyapunov exponent of the spectral cocycle, starting with the formula for the matrix-function that defines it:
$$
\Cc_{\zeta_m}(z_0,z_1,z_2) = \left( \begin{array}{ccc} 1 + z_0 + \cdots + z_0^{2m-1} & z_0^{2m}( 1+z_1 + \cdots + z_1^{m^2-1}) & z_0^{2m} z_1^{m^2} \\
                                                                1 & 0 & 0 \\ 0 & 1 &  0 \end{array} \right),\ \ \ z_j = e^{-2\pi i t_j}.
$$
The next estimate follows the template of \cite{BGM} and \cite{BuSo22}.
In \eqref{Lyap3} we can use the Frobenius norm of the matrix-function to obtain
\begin{eqnarray*}
\log{\|\Cc_{\zeta_m}(\bz)\|}^2_{\rm F} & = & \log\left(3 + \left|\frac{z_0^{2m}-1}{z_0-1}\right|^2 + \left|\frac{z_1^{m^2}-1}{z_1-1}\right|^2\right) \\
                                                   & = & \log\left( 3|z_0-1|^2 |z_1-1|^2 + |z_0^{2m}-1|^2 |z_1-1|^2 + |z_0-1|^2 |z_1^{m^2}-1|^2\right) \\
                                                   &    & -2\log|z_0-1| - 2\log|z_1-1| \\
                                                   & \le & \log\left(12|z_0-1|^2 + 16 + 4  |z_0-1|^2\right)- 2\log|z_0-1| - 2\log|z_1-1|,
\end{eqnarray*}
estimating $|z_1-1|\le 2$, etc. Now we split the integral in \eqref{Lyap3} into the sum of three integrals, each of which reduces to one-dimensional. Further, we apply the formula for the
logarithmic Mahler measure of a polynomial. For a polynomial of a single variable $p(z)$ it is given by
$$
\mfr{m}(p)= \log M(p),\ \ \ M(p) = \exp \int_0^1 \log|p(e^{2\pi i t})|\,dt = |a_0|\cdot \prod_{j\ge 1} \max\{|\alpha_j|,1\},
$$
where $a_0$ is the leading coefficient and $\alpha_j$ are the complex zeros of $p$. Thus,
$$
\int_{\T^3} \log |z_0-1|\,dm_3(\bz) = \int_{\T^3} \log |z_1-1|\,dm_3(\bz) = \mfr{m}(z-1)=0.
$$
Further, writing $z=z_0$ we have $16(|z_0-1|^2 +1) = 16|2-z-z^{-1} +1| = 16|z^2 - 3z +1|$, hence
\begin{eqnarray*}
\int_{\T^3} \log{\|\Cc_{\zeta_m}(\bz)\|}^2_{\rm F}\,dm_3(\bz) & \le &  \log 16 + \mfr{m}(z^2-3z+1) \\
& = & 4\log 2 + \log[(3+\sqrt{5})/2] \\[1.1ex] & = & \log[8(3+\sqrt{5})].
\end{eqnarray*}
Thus, by Claim 2, $\int_{_{\T^3}} \log{\|\Cc_{\zeta_m}(\bz)\|}^2_{\rm F}\,dm_3(\bz)< \half \lam$ is guaranteed whenever $\half \log[8(3+\sqrt{5})] < \half \log(1.9m)$, and this holds for
$m\ge 23$, as desired. Observe that our estimate in the end did not depend on $m$, so we obtain the same upper bound for 
$\int_{_{\T^3}} \log{\|\Cc_{\zeta_{m+1}}(\bz)\|}^2_{\rm F}\,dm_3(\bz)$, completing the proof.
\end{proof}

\begin{remark} {\em 1. One can introduce more variety by considering other substitutions with the same substitution matrices $M_1, M_2$, for instance
$$
\zeta_{m,k}: \ 0\mapsto 0^k 2 0^{2m-k} 1^{m^2},\ 1\mapsto 0,\ 2\mapsto 1,\ \ 1 \le k\le 2m,
$$
and analogously with $\zeta_{m+1,k}$.
One can choose any number of them with arbitrary probabilities. An argument similar to Step 2 yields that the random $S$-adic system satisfies the conditions of
Corollary~\ref{cor:ex} for $m\ge 26$. We leave the details to the reader.

2. The substitution $\zeta_m$ was chosen because it is easy to estimate the eigenvalues of its substitution matrix for $m$ large: they are $\theta_1 \approx (1+\sqrt{2})m,\
\theta_2 \approx (1-\sqrt{2})m, \ \theta_3 \approx m^{-2}$. Thus, this is a non-Pisot matrix. Moreover, one has that the second Lyapunov exponent of the cocycle $A$ from
\eqref{Rauzy-Veech1} is positive: see the appendix for more details.
}
\end{remark}

\section{Details of the computation.}
\begin{proof}[Proof of Claim 1.]
Let $\bx = (x_1, x_2, x_3)^{\sf T}\in \Crs$. Then
$$
x_1^{-1} M_1 \bx = \left(\begin{array}{c}  x_2/x_1 + 2m \\ x_3/x_1 + m^2 \\ 1 \end{array} \right),\ \ \ \ 
x_1^{-1} M_2 \bx = \left(\begin{array}{c}  x_2/x_1 + 2m+2 \\ x_3/x_1 + (m+1)^2 \\ 1 \end{array} \right).
$$
It is enough to verify the inequalities
\begin{equation}\label{ineq1}
2.3 m \le x_2/x_1 + 2m < x_2/x_1 + 2m+2 \le 2.5m+3
\end{equation}
and
\begin{equation}\label{ineq2}
m^2  \le x_3/x_1 + m^2 < x_3/x_1 + (m+1)^2  \le m^2+2m+2.
\end{equation}
By the definition of the cone $\Crs$, see \eqref{cone},
$$
\frac{m^2}{2.5m+3} \le \frac{x_2}{x_1} \le \frac{m^2+2m+2}{2.3m}\,.
$$
Thus, the left inequality in \eqref{ineq1} follows from $\frac{m^2}{2.5m+3} \ge 0.3m$, and the right inequality in \eqref{ineq1} follows from
$\frac{m^2+2m+2}{2.3m}\le 0.5m+1$, both of which hold for $m\ge 4$. The left inequality in \eqref{ineq2} is obvious, whereas the right inequality holds for all $m\ge 1$, since
$x_3/x_1 \le (2.3m)^{-1}$.
\end{proof}

\begin{proof}[Proof of Claim 2.] If $\bx\in \Crs\setminus \{\mathbf{0}\}$, then
$$
{\|\bx\|}_1 = x_1 + x_2 + x_3 \le x_1 + (m^2+2m+3)x_3 \le \Bigl(1 + \frac{m^2+2m+2}{2.3m}\Bigr)x_1,
$$
whereas
$$
{M_1\|\bx\|}_1 = (m+1)^2x_1 + x_2 + x_3 \ge (m+1)^2 x_1 + (m^2+1)x_3 \ge \Bigl((m+1)^2 + \frac{m^2+1}{2.5m+3}\Bigr)x_1,
$$
hence
$$
\frac{{M_1\|\bx\|}_1}{{\|\bx\|}_1} \ge \frac{(m+1)^2 + \frac{m^2+1}{2.5m+3}}{1 + \frac{m^2+2m+2}{2.3m}} \ge 1.9m,\ \ \ \mbox{for}\ m\ge 8,
$$
which is a straightforward, but slightly tedious computation, simplified by the observation that $\frac{m^2+1}{2.5m+3}\ge m/3$ for $m\ge 5$.
\end{proof}

\newpage

\appendix

\section{Lyapunov exponents and weak-mixing for $S$-adic systems}
\smallskip
\begin{center}by \textsc{Pascal Hubert and Carlos Matheus}\end{center}
\medskip

\subsection{Weak-mixing for certain $S$-adic systems}
In this section, we explain how Avila--Forni's famous result \cite{AF} about weak-mixing for interval exchange transformations can be extended to certain random $S$-adic systems.

\begin{theorem}\label{t.A} Let $\Omega$ be a mixing shift of finite type  on a finite alphabet $\B$,  $\varphi: \Omega \to \Rset$ a H\"older potential and $\mu$ the Gibbs probability measure associated to this potential. 
For each element $b \in \B$, we consider a substitution $\zeta_b$ acting on $\A^\Nset$ where $\A$ is a finite alphabet with $d$ letters. 
The matrices of  the substitution induce a cocycle $S$ over $\Omega$ with values in the set of $d\times d$ matrices with entries in $\mathbb{N}$. We suppose  that the following properties hold:  

\noindent {\bf (C1)} For each $b \in \B$, $S_{b}$ has determinant 1.

\noindent {\bf (C2)} There is a word $q$ in the language of $\Omega$ such that all the entries of the matrix $\textrm{S}_q$ are positive
and the substitution $\zeta_q$ satisfies the strong coincidence condition.

\noindent {\bf (C3)} The cocycles $\textrm{S}$ and $\textrm{S}^{-1}$ are log-integrable with respect to $\mu$.

\noindent {\bf (C4)} The second exponent of the cocycle $\textrm{S}$ is positive.


\noindent {\bf (C5)} The group generated by  $\textrm{S}_{q}$, $q$ in the language of $\Omega$, is Zariski dense in  $\textrm{SL}(d,\Rset)$.

Then, for $\mu$ almost every sequence $\underline{a} \in \A^\Nset$, the associated S-adic system is weak-mixing.
\end{theorem}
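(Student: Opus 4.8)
The plan is to follow the Avila--Forni strategy \cite{AF}: reduce weak-mixing to the absence of non-trivial measurable eigenvalues of the Koopman operator, and then exclude such eigenvalues for $\mu$-a.e.\ directive sequence. First I would record the structural consequences of the hypotheses. By {\bf (C2)} the positive word $q$ forces primitivity, hence minimality and unique ergodicity of $(X_{\underline a},T)$; write $\mu_{\underline a}$ for the unique invariant measure. Since $\zeta_q$ satisfies strong coincidence and (the Gibbs measure charging $[q]$, plus ergodicity) appears infinitely often for $\mu$-a.e.\ $\underline a$, Proposition~\ref{prop-aper} together with {\bf (C1)} gives aperiodicity, and Theorem~\ref{th-recog0} then yields recognizability. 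Thus for a.e.\ $\underline a$ the Kakutani--Rokhlin partitions are available, the tower heights are $\mathbf h^{(n)}=(\Sf^{[n]})^\Tf\mathbf 1$ with $\Sf^{[n]}=\Sf_{a_1}\cdots\Sf_{a_n}$, and the standard necessary condition for $\alpha$ to be a measurable eigenvalue is
$$
\bigl\|\alpha\,\mathbf h^{(n)}\bigr\|_{\R^d/\Z^d}=\bigl\|(\Sf^{[n]})^\Tf(\alpha\mathbf 1)\bigr\|_{\R^d/\Z^d}\longrightarrow 0,\qquad n\to\infty .
$$
It therefore suffices to prove that, for a.e.\ $\underline a$, this forces $\alpha\in\Z$.

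Next I would run the Host-type linearization. Writing $(\Sf^{[n]})^\Tf(\alpha\mathbf 1)=\mathbf m^{(n)}+\mathbf r^{(n)}$ with $\mathbf m^{(n)}\in\Z^d$ and $\mathbf r^{(n)}\to 0$, and using $(\Sf^{[n]})^\Tf=\Sf_{a_n}^\Tf(\Sf^{[n-1]})^\Tf$ with $\Sf_{a_n}^\Tf$ integral, the relation $\mathbf m^{(n)}=\Sf_{a_n}^\Tf\mathbf m^{(n-1)}$ must hold \emph{exactly} for all large $n$ (an integer vector pinned down up to a vanishing real error). Hence there are $N$ and $\mathbf m\in\Z^d$ so that the real vector $\mathbf u:=(\Sf^{[N]})^\Tf(\alpha\mathbf 1)-\mathbf m$ satisfies $\|(\Sf^{[n]}_{\sig^N\underline a})^\Tf\mathbf u\|\to 0$; by {\bf (C3)} and Oseledets' theorem for the cocycle $\Sf^\Tf$ this means $\mathbf u\in E^s(\sig^N\underline a)$, the stable Oseledets subspace. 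Using equivariance $(\Sf^{[N]})^\Tf E^s(\underline a)=E^s(\sig^N\underline a)$ gives $\alpha\mathbf 1-(\Sf^{[N]})^{-\Tf}\mathbf m\in E^s(\underline a)$. So a non-trivial eigenvalue produces a vector of the constrained form $\alpha\mathbf 1-\mathbf w$, with $\mathbf w$ in the countable set $\Lam(\underline a):=\bigcup_N(\Sf^{[N]})^{-\Tf}\Z^d$, lying in $E^s(\underline a)$.

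The main obstacle is excluding these resonances. Here {\bf (C4)} and {\bf (C5)} enter decisively. By {\bf (C4)} the two top exponents of $\Sf^\Tf$ are positive (the top one being simple and positive by Furstenberg's theorem, via the positive word of {\bf (C2)} and irreducibility), so $\dim E^u(\underline a)\ge 2$ and $E^s(\underline a)$ has codimension $\ge 2$; moreover $\mathbf 1\notin E^s(\underline a)$ since $(\Sf^{[n]})^\Tf\mathbf 1=\mathbf h^{(n)}$ grows at the top rate $\lam>0$. Projecting modulo $E^s(\underline a)$ onto a space of dimension $\ge 2$, the condition $\alpha\mathbf 1-\mathbf w\in E^s(\underline a)$ reads $\alpha\,\pi(\mathbf 1)=\pi(\mathbf w)$ with $\pi(\mathbf 1)\ne 0$, a codimension-$\ge 1$ coincidence for each individual $\mathbf w$. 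By {\bf (C5)}, $\Gam$ is Zariski dense in $SL(d,\R)$, hence strongly irreducible and proximal; the Guivarc'h--Raugi / Benoist--Quint theory \cite{BQ} then forces the law of the contracting flag $E^s(\underline a)$ to be non-atomic and to charge no proper algebraic subvariety, so each individual resonance event is null. The genuinely hard step is the quantitative summation over the $N$-dependent family $\Lam(\underline a)$: as $N$ grows the lattices $(\Sf^{[N]})^{-\Tf}\Z^d$ refine, so one needs a Borel--Cantelli argument driven by large-deviation estimates for the cocycle, exploiting the spectral gap $\lam>\lam_2>0$ from {\bf (C4)}, showing that a near-resonance with $\|\mathbf m\|\le R$ has probability decaying fast enough to be summable.

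Finally, the degenerate case $\mathbf u=0$ (equivalently $\alpha\mathbf 1\in\Lam(\underline a)$), which corresponds to a rational eigenvalue and a finite rotation factor, I would handle separately: it forces $\alpha\,|\zeta^{[n]}(a)|\in\Z$ for every $a$ and all large $n$, i.e.\ a common denominator $Q\ge 2$ eventually dividing all tower heights; the full support of the Gibbs measure on the mixing SFT, together with ergodicity, makes $\mathbf h^{(n)}\bmod Q$ fail to stabilize at $0$ almost surely, so no such $Q$ survives. Combining the two exclusions shows that a.e.\ eigenvalue is trivial, i.e.\ the system is weak-mixing for $\mu$-a.e.\ $\underline a$. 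I expect the Borel--Cantelli/large-deviation estimate --- the analogue of the parameter-exclusion core of \cite{AF} --- to be the most delicate point, together with the verification that Zariski density supplies exactly the non-degeneracy of $E^s(\underline a)$ that this estimate requires.
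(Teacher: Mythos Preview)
Your outline follows the same Avila--Forni strategy as the paper, and the structural reductions (unique ergodicity, aperiodicity, recognizability, then the Host-type linearization placing a putative eigenvalue into a codimension-$\ge 2$ constraint via {\bf (C4)}) match. The execution differs in two places worth noting. First, the paper does not attempt a self-contained Borel--Cantelli/large-deviation argument: it \emph{induces} on the cylinder $[q]$ from {\bf (C2)} to obtain a strongly expanding countable-state shift, invokes Sarig's thermodynamic formalism (BIP, Gibbs property) to secure the required distortion bounds and exponential decay of correlations, and then imports \cite[\S\S2--5]{AF} wholesale. The non-degeneracy input from {\bf (C5)} enters as \emph{twisting} in the Avila--Viana sense---Zariski density furnishes enough matrices to move any line direction transversally to any codimension-$\ge 2$ subspace, which is what drives \cite[Thm.~5.1]{AF}---rather than via Guivarc'h--Raugi/Benoist--Quint regularity of the stable distribution as you propose. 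Second, the Veech eigenvalue criterion for $S$-adic systems is cited from Arbul\'u--Durand \cite[Prop.~5.1]{AD}, and it is \emph{here} that the strong coincidence part of {\bf (C2)} is actually used, not merely for aperiodicity; you should make that dependence explicit.

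There is one genuine gap to flag. From $\|(\Sf^{[n]})^\Tf\mathbf u\|\to 0$ you conclude $\mathbf u\in E^s(\sigma^N\underline a)$, but mere convergence to zero only places $\mathbf u$ in the \emph{weak stable set}, which in the presence of a zero Lyapunov exponent strictly contains the Oseledets stable space and is not, pointwise, a linear subspace. Avila--Forni work with this weak (``central'') stable lamination throughout, and their Theorem~5.1---that a fixed affine line misses it for almost every parameter---is exactly the statement required. Your Oseledets shortcut elides this, and the Borel--Cantelli step you correctly identify as the crux would, if carried out in full for the weak stable object and summed over the refining lattices $(\Sf^{[N]})^{-\Tf}\Z^d$, essentially reproduce the content of \cite[\S\S2--5]{AF}; the paper's choice is to quote that machinery rather than rebuild it.
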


\begin{proof} We only give a sketch of proof by discussing how to use the arguments in Avila--Forni \cite{AF}. 
Due to (C2), for almost every parameter with respect to $\mu$ the S-adic system is uniquely ergodic.

Section 2 in Avila--Forni \cite{AF} concerns uniform integral locally constant cocycles over strongly expanding systems on simplices. The strongly expanding property is also called distortion estimate in some texts (see, e.g., Avila--Leguil \cite{AL}). In our setting, this is obtained by inducing on a cylinder $q$ where the matrix has all its entries positive (in particular, cylinders replace simplices in Avila--Forni's discussions). In general, the induced shift is then defined on a countable alphabet, but  since we start from a subshift $\Omega$ of finite type, the so-called BIP (big images and pre-images) property is automatically satisfied: see, e.g., Definition 3.5 in Sarig \cite{Sa}. Thus, one can apply Theorem 5.9 from Sarig's survey \cite{Sa} to get exponential decay of correlations for Lipschitz observables. Moreover, from the techniques explained in Sarig's survey \cite{Sa} in connection to Gibbs property of $\mu$, the desired distortion estimate holds.


Section 3 in Avila--Forni \cite{AF} contains an abstract result that applies here. Furthermore, the setting of Section 4 in Avila--Forni \cite{AF} is the same for us thanks to our assumption (C4) that the second Lyapunov exponent of the cocycle $\textrm{S}$ is positive.

Theorem 5.1 in Avila--Forni \cite{AF} is valid in our case because Zariski density assumption in (C5) implies that the cocycle is twisting (in the sense of Avila--Viana \cite{AV}) which is exactly how the proof works. In fact, the statement of \cite[Thm. 5.1]{AF} says that a given affine line $L$ does not intersect the  central stable manifold of a generic parameter.
This is true if the 2-dimensional subspace containing the line $L$ and the central stable manifold are transversal.
The Markov property and Zariski density imply that one can connect arbitrary words  by cylinders whose matrices send the direction of $L$ to independent vectors. This is in contradiction with the fact that the codimension of the central stable space is at least two\footnote{More precisely,
there is a folklore result, as follows: Given a Zariski dense monoid $\Mk$ of, say, $SL(d,\R)$ matrices, one can find a finite subset $F\subset \Mk$ with the
property that for any $k<d$ and any pair of subspaces $E_1, E_2$ of $\R^d$ of dimensions $k$ and $d-k$ respectively, there exists $A\in F$ such that
$A(E_1) \cap E_2 = \{0\}$. See, e.g., \cite[Lemma 4.3]{BreGe} for $k=1$; the case of $k>1$ follows by considering the exterior power; see e.g., the proof of
\cite[Theorem 4.2]{FHM}. Note also that Zariski density of the group is equivalent to the Zariski density of the semigroup.}. 

Finally, Theorem 6.1 in \cite{AF} (originally due to Veech) is known to be valid for $S$-adic systems, under the assumption that the substitution
$\zeta_q$ for a word $q$, which appears in the directive sequence infinitely often, satisfies the strong coincidence condition, see Arbul\'u and Durand
\cite[Proposition 5.1]{AD}.
 This holds $\mu$-almost surely under our argument, and the argument is complete\footnote{
 There are other, ``Veech-type'' conditions in the literature, see,
 e.g., \cite{BCY22,Mercat}.}.
\end{proof}

\subsection{Some examples}

In this section, we show that the assumptions of Theorem \ref{t.A} above applies to the families of substitutions considered by Solomyak in Example 4.1 above. In particular, we solve the question in Remark 4.2 above concerning the positivity of the second Lyapunov exponent attached to these examples. 
Note that the composition of every two substitutions $\zeta_m,\zeta_n$ is left proper, hence the strong coincidence assumption holds.

\subsubsection{Invariant cones}

Let $m\geq 3$ be an integer, and recall that the matrices appearing in Example 4.1 above are:   
$$A=\left(\begin{array}{ccc} 2m & 1 & 0 \\ m^2 & 0 & 1 \\ 1 & 0 & 0 \end{array}\right) \quad \textrm{and} \quad B=\left(\begin{array}{ccc} 2(m+1) & 1 & 0 \\ (m+1)^2 & 0 & 1 \\ 1 & 0 & 0 \end{array}\right).$$ 

Note that the inverses of these matrices are 
$$A^{-1}=\left(\begin{array}{ccc} 0 & 0 & 1 \\ 1 & 0 & -2m \\ 0 & 1 & -m^2 \end{array}\right) \quad \textrm{and} \quad B^{-1}=\left(\begin{array}{ccc} 0 & 0 & 1 \\ 1 & 0 & -2(m+1) \\ 0 & 1 & -(m+1)^2 \end{array}\right).$$ 

Define the cone 
$$\mathcal{C} = \{(0,0,0)\}\cup \left\{(x_1,x_2,x_3)\in\mathbb{R}^3: x_1\neq 0, -3m\leq \frac{x_2}{x_1}\leq -2m, -m^2-3m\leq \frac{x_3}{x_1}\leq -m^2+1\right\}.$$

\begin{prop}\label{p.1} The cone $\mathcal{C}$ is invariant under the semigroup generated by $A^{-1}$ and $B^{-1}$. 
\end{prop}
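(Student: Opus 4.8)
The plan is to verify directly that each generator $A^{-1}$ and $B^{-1}$ maps the cone $\mathcal{C}$ into itself; since the semigroup is generated by these two matrices and cone-invariance is preserved under composition, this suffices. The strategy exactly parallels the proof of Claim 1 in Section 5, but now applied to the inverse matrices and the rescaled coordinates suited to them. First I would take an arbitrary nonzero $\bx = (x_1,x_2,x_3)^{\sf T}\in\mathcal{C}$ and, using the explicit forms of $A^{-1}$ and $B^{-1}$ given just above the proposition, compute the images $A^{-1}\bx$ and $B^{-1}\bx$ componentwise. Because the first row of each inverse is $(0,0,1)$, the first coordinate of the image is simply $x_3$, so the natural normalization is to divide through by $x_3$ (equivalently, to track the ratios $x_2/x_3$ and $x_3/x_3$ of the image against the defining ratios of $\mathcal{C}$).

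Concretely, writing $\bx' = A^{-1}\bx = (x_3,\, x_1 - 2m x_3,\, x_2 - m^2 x_3)^{\sf T}$, I would need to check that $x_1'\neq 0$ (which holds since $x_1'=x_3$, and $x_3\neq 0$ because $x_1\neq 0$ together with the ratio bounds forces $x_3\neq 0$) and then that
$$
-3m \le \frac{x_2'}{x_1'} = \frac{x_1}{x_3} - 2m \le -2m,\qquad
-m^2-3m \le \frac{x_3'}{x_1'} = \frac{x_2}{x_3} - m^2 \le -m^2+1.
$$
Both of these should reduce, via the hypotheses $-3m\le x_2/x_1\le -2m$ and $-m^2-3m\le x_3/x_1\le -m^2+1$, to elementary bounds on $x_1/x_3$ and $x_2/x_3$. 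The key preliminary observation is that the stated bounds on $x_2/x_1$ and $x_3/x_1$ translate into two-sided bounds on the reciprocal-type quantities $x_1/x_3$ and $x_2/x_3$, and I expect these to fall into the required windows once I substitute the extreme values. The computation for $B^{-1}$ is identical in structure, with $m$ replaced by $m+1$ in the subtracted terms, so I would run the same estimates and confirm they still land inside the (fixed, $m$-independent) cone.

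The main obstacle I anticipate is purely bookkeeping: the cone $\mathcal{C}$ is defined by ratios $x_2/x_1$ and $x_3/x_1$, but the inverse matrices most naturally produce expressions in $x_1/x_3$ and $x_2/x_3$, so I must first invert the defining inequalities carefully (paying attention to signs, since all the ratios are negative) to obtain usable bounds on these new ratios. Since the defining ratios are negative and bounded away from $0$ and $\infty$, this inversion is valid and yields clean bounds, but it requires care to get the direction of each inequality right. I would then verify that the resulting numerical inequalities hold for all $m\ge 3$ — these are of the same flavor as the inequalities "$\tfrac{m^2}{2.5m+3}\ge 0.3m$" appearing in the proof of Claim 1, i.e., low-degree rational inequalities that hold on the relevant range of $m$. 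As in Section 5, I would relegate the final routine arithmetic to the reader, having reduced the proposition to a finite list of such verifications.
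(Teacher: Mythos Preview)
Your proposal is correct and follows essentially the same approach as the paper's proof: compute $A^{-1}\bx$ and $B^{-1}\bx$ explicitly, divide through by the new first coordinate $x_3$, express the resulting ratios $x_2'/x_1' = x_1/x_3 - 2m$ and $x_3'/x_1' = x_2/x_3 - m^2$ in terms of the original ratios, and check the required bounds for $m\ge 3$. The paper carries out exactly this computation, obtaining explicit intervals for $x_1/x_3$ and $x_2/x_3$ from the defining inequalities of $\mathcal{C}$ and verifying (in fact showing the image lands in the interior of $\mathcal{C}$) that everything fits; your anticipated ``bookkeeping obstacle'' of inverting the negative ratios is handled there just as you describe.
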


\begin{proof} By definition, 
$$\left(\begin{array}{c} x_1' \\ x_2' \\ x_3' \end{array}\right):=A^{-1}\left(\begin{array}{c} x_1 \\ x_2 \\ x_3\end{array}\right) = \left(\begin{array}{c} x_3 \\ x_1-2mx_3 \\ x_2-m^2 x_3 \end{array}\right).$$ 
Therefore, if $\vec{x}=(x_1,x_2,x_3)\in\mathcal{C}\setminus\{(0,0,0)\}$, then 
$$\frac{x_2'}{x_1'} = \frac{x_1}{x_3}-2m\in \left[-\frac{1}{m^2-1}-2m,-\frac{1}{m^2+3m}-2m\right]$$ 
and 
$$\frac{x_3'}{x_1'} = \frac{x_2}{x_3}-m^2 = \frac{(-x_2/x_1)}{(-x_3/x_1)}-m^2\in\left[\frac{2m}{m^2+3m}-m^2,\frac{3m}{m^2-1}-m^2\right],$$
so that $A^{-1}(\vec{x})$ falls in the interior of $\mathcal{C}$ when $m\geq 3$.

Similarly, 
$$\left(\begin{array}{c} x_1'' \\ x_2'' \\ x_3'' \end{array}\right):=B^{-1}\left(\begin{array}{c} x_1 \\ x_2 \\ x_3\end{array}\right) = \left(\begin{array}{c} x_3 \\ x_1-2(m+1)x_3 \\ x_2-(m+1)^2 x_3 \end{array}\right).$$ 
Hence, if $\vec{x}=(x_1,x_2,x_3)\in\mathcal{C}\setminus\{(0,0,0)\}$, then 
$$\frac{x_2''}{x_1''} = \frac{x_1}{x_3}-2-2m\in \left[-\frac{1}{m^2-1}-2-2m,-\frac{1}{m^2+3m}-2-2m\right]$$ 
and 
$$\frac{x_3''}{x_1''} = \frac{x_2}{x_3}-(m+1)^2 = \frac{(-x_2/x_1)}{(-x_3/x_1)}-1-2m-m^2\in\left[\frac{2m}{m^2+3m}-1-2m-m^2,\frac{3m}{m^2-1}-1-2m-m^2\right],$$
so that $B^{-1}(\vec{x})$ falls in the interior of $\mathcal{C}$ when $m\geq 3$.
 \end{proof}

\subsubsection{Uniform expansion} Once an adequate invariant cone was identified, let us show that the vectors in this cone are expanded: 

\begin{prop}\label{p.2} For each $\vec{x}\in\mathcal{C}\setminus\{(0,0,0)\}$, one has 
$$\frac{\|A^{-1}(\vec{x})\|_{L^1}}{\|\vec{x}\|_{L^1}}\geq \frac{m^4+2m^3-5m}{m^2+6m+1} \quad \textrm{and} \quad \frac{\|B^{-1}(\vec{x})\|_{L^1}}{\|\vec{x}\|_{L^1}}\geq \frac{m^4+4m^3+3m^2-7m-3}{m^2+6m+1}$$
\end{prop}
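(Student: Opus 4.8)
The plan is to reduce both inequalities to the evaluation of affine functions at the corners of a rectangle. Since the ratio $\|A^{-1}\vec x\|_{L^1}/\|\vec x\|_{L^1}$ is invariant under scaling $\vec x\mapsto\lambda\vec x$ and under $\vec x\mapsto-\vec x$, I would first normalize to $x_1=1$ (using $x_1\ne0$ in the cone together with the $\vec x\mapsto-\vec x$ symmetry to arrange $x_1>0$), and set $u:=x_2$, $v:=x_3$, so that $\vec x\in\mathcal C$ becomes the box constraint $u\in[-3m,-2m]$, $v\in[-m^2-3m,-m^2+1]$.

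The only delicate point is to remove the absolute values in the two $L^1$-norms, which requires pinning down the signs of all the coordinates involved. From the box constraints and $m\ge3$ one has $u<0$ and $v<0$, so $\|\vec x\|_{L^1}=1-u-v$. For the image, recall from the proof of Proposition~\ref{p.1} that $A^{-1}\vec x=(v,\,1-2mv,\,u-m^2v)$: its first coordinate is $v<0$, its second is $1-2mv>0$, and its third is $u-m^2v$, which is positive since $u-m^2v\ge-3m+m^2(m^2-1)>0$ (alternatively, this sign is forced by Proposition~\ref{p.1}, which places $A^{-1}\vec x$ in $\mathcal C$ with negative first coordinate and hence positive second and third coordinates). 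Collecting signs,
$$
\|A^{-1}\vec x\|_{L^1}=-v+(1-2mv)+(u-m^2v)=1+u-(m+1)^2v .
$$

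Both $1+u-(m+1)^2v$ and $1-u-v$ are affine in $(u,v)$, so their extrema over the rectangle are attained at vertices, and I would bound the numerator from below and the denominator from above separately. The numerator is increasing in $u$ and decreasing in $v$, hence minimized at $(u,v)=(-3m,-m^2+1)$, giving $m^4+2m^3-5m$; the denominator is decreasing in both variables, hence maximized at $(u,v)=(-3m,-m^2-3m)$, giving $m^2+6m+1$. Dividing yields the first inequality. The computation for $B^{-1}\vec x=(v,\,1-2(m+1)v,\,u-(m+1)^2v)$ is identical: the same sign analysis gives $\|B^{-1}\vec x\|_{L^1}=1+u-(m+2)^2v$, whose minimum over the rectangle is $m^4+4m^3+3m^2-7m-3$, while the denominator bound is unchanged, producing the second inequality.

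The main ``obstacle'' is purely bookkeeping: one must correctly determine the signs of the three coordinates of $A^{-1}\vec x$ and $B^{-1}\vec x$ so that the norms become linear, after which the result is a two-corner evaluation. Note that minimizing the numerator and maximizing the denominator at \emph{different} vertices is slightly wasteful --- the actual infimum of the ratio is attained at the single corner $(-3m,-m^2+1)$ and has denominator $m^2+3m$ rather than $m^2+6m+1$ --- but this cruder estimate delivers the clean bound as stated.
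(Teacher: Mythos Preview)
Your proof is correct and follows essentially the same approach as the paper: normalize by $x_1$, determine the signs of all coordinates so that both $L^1$-norms become affine in $(x_2/x_1,x_3/x_1)$, then bound the numerator from below and the denominator from above over the rectangle defining $\mathcal{C}$. The only cosmetic difference is that the paper factors out $x_1$ rather than setting $x_1=1$, and it omits your closing remark about the two bounds being attained at different vertices.
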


\begin{proof} We can assume without loss of generality that $x_1>0$. In this case, the fact that $\vec{x}\in\mathcal{C}$ implies that $x_3 < 0$ and $x_2-m^2x_3, x_2-(m+1)^2x_3>0$, so that  
$$\|\vec{x}\|_{L^1} = x_1-x_2-x_3 = x_1(1-x_2/x_1-x_3/x_1)\leq x_1(1+6m+m^2)$$ 
and 
\begin{eqnarray*}
\|A^{-1}(\vec{x})\|_{L^1} &=& -x_3 + x_1-2mx_3 + x_2-m^2x_3 = x_1\left(1+\frac{x_2}{x_1}-(m+1)^2\frac{x_3}{x_1}\right) \\ 
&\geq& x_1(1-3m+(m+1)^2(m^2-1)), 
\end{eqnarray*}
\begin{eqnarray*}
\|B^{-1}(\vec{x})\|_{L^1} &=& -x_3 + x_1-2(m+1)x_3 + x_2-(m+1)^2x_3 = x_1\left(1+\frac{x_2}{x_1}-(m+2)^2\frac{x_3}{x_1}\right) \\ 
&\geq& x_1(1-3m+(m+2)^2(m^2-1)).
\end{eqnarray*} 
This completes the argument. 
\end{proof} 

\subsubsection{Conclusions}

The second Lyapunov exponent $\lambda_2$ of the semigroup generated by $A$ and $B$ is positive. In fact, Solomyak showed that the top Lyapunov exponent $\lambda_1$ of $A$ and $B$ satisfies $\log(3m)\geq \lambda_1\geq \log(1.9 m)$ for $m\geq 23$. On the other hand, Propositions \ref{p.1} and \ref{p.2} imply that the top Lyapunov exponent $\mu_1$ of the inverse cocycle (generated by $A^{-1}$ and $B^{-1}$) is $\geq \log(0.9 m^2)$ for $m\geq 35$. Since the third Lyapunov exponent $\lambda_3$ of the cocycle generated by $A, B\in SL(3,\mathbb{Z})$ satisfies $\lambda_3=-\mu_1$ and $\lambda_1+\lambda_2+\lambda_3=0$, we obtain that $\lambda_2\geq \log(0.9 m^2) - \log(3m)>0$ for $m\geq 35$. 

Also, it is not hard to check that $A$ and $B$ generate a Zariski dense subgroup of $SL(3,\mathbb{R})$. Indeed, the characteristic polynomial of $A$ is $1+m^2 x +2m x^2 - x^3$ has discriminant $8m^6-68m^3-27$. Therefore, the three eigenvalues of $A$ (and $B$) are real. Since $8m^6-68m^3-27$ is an irreducible polynomial of $m$, Siegel's theorem says that its values are not a square for all but finitely many choices of $m$. Hence, the Galois groups of the characteristic polynomials of $A$ and $B$ are the full symmetric group $\textrm{Sym}_3$ provided these polynomials are irreducible and, as it turns out, this is the case when $m=3, 4$ modulo $17$. Also, the roots of the characteristic polynomials of $A$ and $B$ have distinct moduli (cf. Remark 4.2 above), so that these matrices are Galois-pinching. Finally, we have that $A$ and $B$ have infinite order and they do not commute. From these facts, we can derive the Zariski density of the semi-group generated by $A$ and $B$ by applying the results of Prasad--Rapinchuk \cite{PR}. 

Consequently, the family of substitutions introduced by Solomyak in Example 4.1 above satisfy the assumptions of Theorem \ref{t.A} whenever $m=3$ modulo $17$ is sufficiently large. 

\medskip

{\bf Acknowledgement.} 
We are grateful to Felipe Arbul\'u for the clarification of the aperiodicity issue and for pointing out that an assumption was missing for the validity of the Veech criterion for $S$-adic systems in our original version.

\bigskip

\end{document}